%
%
%
%

\RequirePackage{fix-cm}
\documentclass{scrartcl}
%
%
%
\usepackage{graphicx}
%
%


    \usepackage[latin1]{inputenc}
    \usepackage[T1]{fontenc}

    \usepackage{amssymb, amsmath, amsfonts, amsthm} 
    \usepackage{mathtools} \mathtoolsset{showonlyrefs} 

    \usepackage{hyperref} 
    \usepackage[all]{hypcap}

    \hypersetup{colorlinks=false} 

    \newcommand{\N}{{\mathbb N}}
    \newcommand{\Z}{{\mathbb Z}}
    
    \newcommand{\R}{{\mathbb R}}
    \newcommand{\C}{{\mathbb C}}
    
    \newcommand{\BigO}{{O}}
    
    \newcommand{\SL}{\operatorname{SL}}
    
    \newcommand{\vol}{\operatorname{vol}}

    \let\op=\operatorname
    \newcommand{\mat}[1]{\begin{pmatrix}#1\end{pmatrix}}
    \newcommand{\e}{\varepsilon}

    \newcommand{\abs}[1]{\left|#1\right|}

    \renewcommand{\epsilon}{\varepsilon}

    \renewcommand{\Re}{\operatorname{Re}}




    \newcommand{\defeq}{\mathrel{\mathop:}=} 
    \newcommand{\eqdef}{=\mathrel{\mathop:}} 
    \newcommand{\inv}{^{-1}}
    \newcommand{\T}{^{t}} 
    \newcommand{\tr}{\operatorname{tr}}
    

    \newcommand{\dfn}[1]{\textbf{#1}}

    \newcommand{\dd}[1]{\mathop{d#1}}

    \newcommand{\minfelterm}{\BigO_\e(T^{n(n-1)-1/(2n)+\epsilon})}

    \theoremstyle{definition}
    
    \newtheorem{Remark}[equation]{Remark}

    \theoremstyle{plain}
    \newtheorem{Theorem}[equation]{Theorem}
    \newtheorem{Lemma}[equation]{Lemma}
    \newtheorem{Proposition}[equation]{Proposition}
    \newtheorem{Corollary}[equation]{Corollary}


%

\begin{document}

\title{Counting nonsingular matrices with primitive row vectors
  \thanks{The author was partially supported by the Swedish Research Council.}
}


\author{Samuel Holmin
}




\maketitle

\begin{abstract}
We give an asymptotic expression for the number of nonsingular integer $n\times n$-matrices with primitive row vectors, determinant $k$, and Euclidean matrix norm less than $T$, as $T\to\infty$. 

We also investigate the density of matrices with primitive rows in the space of matrices with determinant $k$, and determine its asymptotics for large $k$.
\end{abstract}

\section{Introduction}
An integer vector $v\in\Z^n$ is \dfn{primitive}\index{primitive vector} if it cannot be written as an integer multiple $m\neq 1$ of some other integer vector $w\in \Z^n$. Let $A$ be an integer $n\times n$-matrix with nonzero determinant $k$ and primitive row vectors. We ask how many such matrices $A$ there are of {Euclidean norm} at most $T$, that is, $\|A\|\leq T$, where $\|A\|\defeq\sqrt{\sum a_{ij}^2}=\sqrt{\tr(A\T A)}$. Let $N'_{n,k}(T)$ be this number (the prime in the notation denotes the primitivity of the rows), and let $N_{n,k}(T)$ be the corresponding counting function for matrices with not necessarily primitive row vectors. We will determine the asymptotic behavior of $N'_{n,k}(T)$ for large $T$, and investigate the density $D_n(k)\defeq \lim_{T\to\infty} N'_{n,k}(T)/N_{n,k}(T)$ of matrices with primitive vectors in the space of matrices with nonzero determinant $k$. Since $N'_{n,k}$ and $N_{n,k}$ do not depend on the sign of $k$, we will without loss of generality assume that $k>0$.

Let $M_{n,k}$\index{$M_{n,k}$} be the set of integer $n\times n$-matrices with determinant $k$. Then $N_{n,k}(T) = |B_T\cap M_{n,k}|,$ where $B_T$\index{$B_T$} is the (closed) ball of radius $T$ centered at the origin in the space $M_n(\R)$\index{$M_n(\R)$} of real $n\times n$-matrices  equipped with the Euclidean norm. Throughout, we will assume that $n\geq 2$ and $k>0$ unless stated otherwise.

Duke, Rudnick and Sarnak \cite{DRS} found that the asymptotic behavior of $N_{n,k}$\index{$N_{n,k}(T)$} is given by
\begin{gather}
  N_{n,k}(T)=c_{n,k}T^{n(n-1)}+\BigO_\e(T^{n(n-1)-1/(n+1)+\e}),
\end{gather}
as $T\to\infty$, for a certain constant $c_{n,k}$\index{$c_{n,k}$} and all $\e>0$, where the error term  can be improved to $O(T^{4/3})$ for $n=2$. 
The corresponding case for singular matrices was later investigated by Katznelson, who proved in \cite{Katznelson} that
\begin{gather}
  N_{n,0}(T)=c_{n,0}T^{n(n-1)}\log T+\BigO(T^{n(n-1)}).
\end{gather}
See the next page for the constants $c_{n,k}$ and $c_{n,0}$.

Let $M_{n,k}'$\index{$M'_{n,k}$}\index{$N'_{n,k}(T)$}\index{$c'_{n,k}$} be the set of matrices in $M_{n,k}$ with primitive row vectors. Then $N'_{n,k}(T)= |B_T\cap M'_{n,k}|$. Wigman \cite{Wigman} determined the asymptotic behavior of the counting function $|G_T\cap M'_{n,0}|$, where $G_T$ is a ball of radius $T$ in $M_n(\R)$, under a slightly different norm than ours. The results can be transferred to our setting, whereby we have
\begin{align}
N_{n,0}'(T)&=c'_{n,0}T^{n(n-1)}\log T+\BigO(T^{n(n-1)}),\qquad n\geq 4,\\
N_{3,0}'(T)&=c'_{3,0}T^{3(3-1)}\log T+\BigO(T^{3(3-1)}\log\log T),\\
N_{2,0}'(T)&=c'_{2,0}T^{2(2-1)}+\BigO(T).
\end{align}

The case $n=2$ above is equivalent to the \dfn{primitive circle problem}\index{primitive circle problem}, which asks how many primitive vectors there are of length at most $T$ in $\Z^2$ given any (large) $T$.

The main result in our paper is the following asymptotic expression for the number of nonsingular matrices with primitive row vectors and fixed determinant.
\begin{Theorem} \label{main_theorem}
Let $k\neq 0$. Then
\begin{gather}
  N'_{n,k}(T)=c'_{n,k}T^{n(n-1)}+\minfelterm,
\end{gather}
as $T\to\infty$ for a certain constant $c'_{n,k}$ and all $\e>0$.
\end{Theorem}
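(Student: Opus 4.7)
The plan is to detect primitivity of each row by Möbius inversion and reduce to a finite linear combination of Duke--Rudnick--Sarnak-type counts with divisibility constraints. Applying the identity $\mathbf{1}[v\text{ primitive}] = \sum_{d \mid \gcd(v)} \mu(d)$ to each row of $A$ and interchanging summations gives
\begin{gather}
  N'_{n,k}(T) = \sum_{d_1,\ldots,d_n \geq 1} \mu(d_1) \cdots \mu(d_n)\, N^D_{n,k}(T),
\end{gather}
where $N^D_{n,k}(T)$ counts $A \in M_{n,k} \cap B_T$ with $d_i$ dividing every entry of the $i$-th row. Cofactor expansion along row $i$ forces $d_i \mid k$, and writing $A = D A'$ with $D = \mathrm{diag}(d_1,\ldots,d_n)$ additionally forces $\prod d_i \mid k$; consequently the sum over tuples $D$ is finite for each fixed $k$.

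The substitution $A = DA'$ identifies the set counted by $N^D_{n,k}(T)$ with the set of $A' \in M_{n,\, k/\prod d_i}$ lying in the ellipsoidal region $\{A' : \sum_i d_i^2 \|\mathrm{row}_i(A')\|^2 \le T^2\}$. Since this region is a $T$-dilate of a fixed well-rounded body in $M_n(\R)$, I expect a suitable extension of the Duke--Rudnick--Sarnak equidistribution method to yield
\begin{gather}
  N^D_{n,k}(T) = c_{n,k,D}\,T^{n(n-1)} + \minfelterm
\end{gather}
for each admissible $D$, with main term a volume of the ellipsoid intersected with the determinant-$k/\prod d_i$ variety (weighted by the appropriate Tamagawa factors). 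Summing the main terms over the finitely many tuples $D$ produces the claimed constant $c'_{n,k} = \sum_D \mu(d_1)\cdots\mu(d_n)\, c_{n,k,D}$, and summing the finitely many error terms retains the same order.

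The main obstacle is the error bound. The DRS bound $O_\e(T^{n(n-1)-1/(n+1)+\e})$ is stated for Euclidean balls and for the full lattice $M_n(\Z)$; transferring it to the sublattice $D\,M_n(\Z)$ of index $(\prod d_i)^n$, or equivalently to the ellipsoidal region under $A = DA'$, requires revisiting their equidistribution argument and tracking how averages over $\mathrm{SL}_n$-orbits interact with congruence conditions on the rows. The weaker exponent $-1/(2n)$ suggests that the proof does not simply quote DRS as a black box but rather uses an argument whose error rate is governed by the thickness of the determinant variety near the boundary of the ellipsoid; a plausible route is a direct harmonic-analytic count on $V_k$ with the divisibility conditions detected by additive characters modulo $d_i$, with the loss compared to DRS coming from the extra summation over characters and the ellipsoidal geometry. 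Verifying the resulting identity for $c'_{n,k}$, in particular its convergence and non-triviality, should then be routine.
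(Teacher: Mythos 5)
Your approach is genuinely different from the paper's. You detect row primitivity via M\"obius inversion and reduce to counts $N^D_{n,k}(T)$ with row divisibility constraints, then substitute $A = DA'$ to transform into counting matrices of determinant $k/\prod d_i$ in an ellipsoidal region. The paper instead decomposes $M'_{n,k}$ directly into a finite disjoint union of $\SL_n(\Z)$-orbits $A_i\SL_n(\Z)$ indexed by Hermite normal forms $A_i$ with primitive rows, observes that $|B_T \cap A_i\SL_n(\Z)| = |A_i^{-1}B_T \cap \SL_n(\Z)|$, and applies a known lattice point count for $\SL_n(\Z)$ inside norm balls; the combinatorics of primitivity is isolated into a separate computation of the number of orbits (Proposition~\ref{formula_anprime_hehe}), which is where an inclusion--exclusion akin to your M\"obius step actually appears in the paper. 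Your route is a legitimate alternative decomposition, but it leaves the asymptotic count of $N^D_{n,k}(T)$ to be done from scratch, whereas the paper's decomposition reduces everything immediately to a single, uniform $\SL_n(\Z)$ count.

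The genuine gap you yourself flag at the end is real and is precisely where the argument is incomplete. You need, for each admissible $D$, an asymptotic count with power-saving error for matrices of fixed nonzero determinant inside an ellipsoidal (i.e.\ non-Euclidean norm) region, and you do not have a reference or argument that supplies this. The speculative routes you suggest (revisiting the DRS equidistribution argument, or a direct harmonic-analytic count with additive characters mod $d_i$) are not worked out, and there is no reason to believe the additive-character approach yields the stated exponent. The missing ingredient the paper uses is Gorodnik--Nevo's Corollary~2.3 (Theorem~\ref{gn_cor} here), which gives $|G_T \cap \SL_n(\Z)| = \nu(G_T \cap \SL_n(\R)) + O_\epsilon(T^{n(n-1) - 1/(2n) + \epsilon})$ for balls $G_T$ under \emph{any} norm on $M_n(\R)$, and this is exactly what explains the exponent $-1/(2n)$ rather than the DRS exponent $-1/(n+1)$. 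To make your proposal rigorous you would still need to combine your M\"obius step with a Hermite-normal-form orbit decomposition of $M_{n,k/\prod d_i}$ in order to bring Gorodnik--Nevo to bear on the ellipsoidal region (equivalently, on the norm $\|X\|' := \|DA_i^{-1}X\|$). At that point your argument becomes a longer version of the paper's, with the M\"obius inversion appearing twice: once on the counting function and once implicitly in the orbit count. Your computation of the main term constant $c'_{n,k}$ is also left entirely to ``routine verification''; the paper derives the constant explicitly from the number of Hermite normal forms with primitive rows and the unimodular-invariance of the measure $\nu$.
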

\noindent Section \ref{counting_problem} is dedicated to the proof of this theorem. 

The constant in Theorem \ref{main_theorem} can be written as
\[c'_{n,k}=\dfrac{C_1}{\abs{k}^{n-1}}\sum_{d_1\cdots d_n=|k|}\prod_{i=1}^n \sum_{g|d_i}\mu(g)\left(\dfrac {d_i}g\right)^{i-1},\]
for $k\neq 0$, which may be compared to the constants obtained from \cite{DRS}, \cite{Katznelson} and \cite{Wigman}, namely
\begin{align}
c_{n,k}&=\dfrac{C_1}{\abs{k}^{n-1}}\sum_{d_1\cdots d_n=|k|} \prod_{i=1}^n d_i^{i-1}\\
c_{n,0}&=C_0\dfrac{n-1}{\zeta(n)}\\
c'_{n,0}&=\begin{cases}\displaystyle C_0 \dfrac{n-1}{\zeta(n-1)^n\zeta(n)}& (n\geq 3)\\
  \displaystyle \frac{\pi T^2}{\zeta(2)}& (n=2)
\end{cases}
\end{align}
where $\zeta$ is the Riemann zeta function, $\mu$ is the Möbius function, and $C_0$ and $C_1$ are constants defined as follows (these depend on $n$, but we will always regard $n$ as fixed). Let $\nu$ be the normalized Haar measure on $\SL_n(\R)$. The measure $w$ below is obtained by averaging the $n(n-1)$-dimensional volume of $E\cap A_u$ over all classes $A_u\defeq \{A\in M_n(\R): Au=0\}$ for nonzero $u\in \R^n$. In Appendix \ref{section_katznelson_measure} we give a precise definition of $w$ and calculate $w(B_1)$. 

Write $V_n$ for the volume of the unit ball in $\R^n$ and $S_{n-1}$ for the surface area of the $(n-1)$-dimensional unit sphere in $\R^n$. Then\index{$C_0$}\index{$C_1$}
\begin{align}
C_0&\defeq w(B_1)=\dfrac{V_{n(n-1)}S_{n-1}}2=\dfrac{\pi^{n^2/2}}{\Gamma\left(\dfrac n2\right)\Gamma\left(\dfrac{n(n-1)}2+1\right)},\\
C_1&\defeq \lim_{T\to\infty}\dfrac{\nu(B_T\cap \SL_n(\R))}{T^{n(n-1)}} =\dfrac{V_{n(n-1)}S_{n-1}}{2\zeta(2)\cdots\zeta(n)} = \frac{C_0}{\zeta(2)\cdots\zeta(n)}.
\end{align}

\subsection{Density}
It will be interesting to compare the growth of $N'_{n,k}$ to that of $N_{n,k}$. 
We define the \dfn{density} of matrices with primitive rows in the space $M_{n,k}$ to be
\begin{gather}
  D_n(k)\defeq 
    \lim_{T\to\infty}\dfrac{N'_{n,k}(T)}{N_{n,k}(T)}=\dfrac{c'_{n,k}}{c_{n,k}}.
\end{gather}

The asymptotics of $N_{n,0}$ and $N'_{n,0}$ are known from \cite{Katznelson} and \cite{Wigman}, and taking their ratio, we see that
\begin{gather}
  D_n(0)=\dfrac1{\zeta(n-1)^n} 
\end{gather}
for $n\geq 3$. 
We will be interested in the value of $D_n(k)$ for large $n$ and large $k$. The limit of $D_n(k)$ as $k\to\infty$ does not exist, but it does exist for particular sequences of $k$.

We say that a sequence of integers is \dfn{totally divisible}\index{totally divisible sequence} if its terms are eventually divisible by all positive integers smaller than $m$, for any $m$. We say that a sequence of integers is \dfn{rough}\index{rough sequence} if its terms eventually have no divisors smaller than $m$ (except for $1$), for any $m$.
An equivalent formulation is that a sequence $(k_1,k_2,\ldots)$ is totally divisible if and only if $|k_i|_p\to 0$ as $i\to\infty$ for all primes $p$, and $(k_1,k_2,\ldots)$ is rough if and only if $|k_i|_p\to 1$ as $i\to\infty$ for all primes $p$, where $|m|_p$ denotes the $p$-adic norm of $m$.

We state our main results about the density $D_n$. We prove these in section \ref{chapter_density}.

\begin{Theorem} \label{thm_density}
  Let $n\geq 3$ be fixed. Then $D_n$ is a multiplicative function, and $D_n(p^m)$ is strictly decreasing as a function of $m$ for any prime $p$. We have
  \begin{gather}
    \dfrac1{\zeta(n-1)^n} = D_n(0) < D_n(k) < D_n(1) = 1
  \end{gather}
  for all $k\neq 0,1$. Now let $k_1,k_2,\ldots$ be a sequence of integers. Then \[D_n(k_i)\to 1\] if and only if $(k_1,k_2,\ldots)$ is a rough sequence, and \[D_n(k_i)\to \dfrac1{\zeta(n-1)^n}\] if and only if $(k_1,k_2,\ldots)$ is a totally divisible sequence. Moreover, $D_n(k)\to 1$ uniformly as $n\to\infty$.
\end{Theorem}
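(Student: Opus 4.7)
The approach is to reduce the theorem to a local analysis at each prime via multiplicativity. First I would observe that in the inner sum defining $c'_{n,k}$, the $i=1$ factor is $\sum_{g|d_1}\mu(g)=[d_1=1]$, which restricts the outer sum to ordered factorizations $(d_2,\ldots,d_n)$ of $|k|$, while for $i\geq 2$ the inner sum is the Jordan totient $J_{i-1}(d_i)=d_i^{i-1}\prod_{p|d_i}(1-p^{-(i-1)})$. Writing
\[
g(k)=\sum_{d_2\cdots d_n=k}\prod_{i=2}^n J_{i-1}(d_i),\qquad f(k)=\sum_{d_2\cdots d_n=k}\prod_{i=2}^n d_i^{i-1},
\]
we can then express $D_n(k)=g(|k|)/(\mathbf 1*f)(|k|)$ as a ratio of two Dirichlet convolutions of multiplicative functions, which gives multiplicativity of $D_n$. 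All remaining claims reduce to understanding the sequence $D_n(p^m)$ for fixed prime $p$.

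The local computation is via generating functions. Using $\sum_{a\geq 0}J_{i-1}(p^a)x^a=(1-x)/(1-p^{i-1}x)$ for $i\geq 2$ and the analogous identity for the denominator, one finds
\[
D_n(p^m)=\frac{A_m}{B_m},\qquad A_m=[x^m](1-x)^{n-1}F(x),\qquad B_m=[x^m](1-x)^{-1}F(x),
\]
with $F(x)=\prod_{i=2}^n(1-p^{i-1}x)^{-1}$. The identity $D_n(1)=A_0/B_0=1$ is immediate. A residue calculation at the dominant pole $x=p^{-(n-1)}$ of $F$ gives $A_m\sim c\,(1-p^{-(n-1)})^{n-1}p^{(n-1)m}$ and $B_m\sim c\,(1-p^{-(n-1)})^{-1}p^{(n-1)m}$ for the same positive constant $c$, hence $\lim_{m\to\infty}D_n(p^m)=(1-p^{-(n-1)})^n$. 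Strict monotonicity $A_m/B_m<A_{m-1}/B_{m-1}$ can be established by expanding $f(p^j)=\sum_{i=2}^n\alpha_ip^{(i-1)j}$ in partial fractions, writing both sides of the cross-multiplied inequality as explicit polynomials in $p$ and comparing them directly, or by an induction on $m$ using $B_m=B_{m-1}+f(p^m)$ together with $A_m=\sum_{k=0}^{n-1}(-1)^k\binom{n-1}{k}f(p^{m-k})$.

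Everything else follows from the Euler product $D_n(k)=\prod_p D_n(p^{v_p(k)})$ combined with monotonicity and the limit. For $k\neq 0,1$ at least one $v_p(k)\geq 1$, giving $(1-p^{-(n-1)})^n<D_n(p^{v_p(k)})<1$ and therefore $1/\zeta(n-1)^n<D_n(k)<1$. For rough $(k_i)$, every fixed local factor is $1$ eventually; the bound $D_n(p^m)\geq 1-np^{-(n-1)}$ (via Bernoulli applied to $(1-p^{-(n-1)})^n$) and the convergence of $\sum_p p^{-(n-1)}$ for $n\geq 3$ let me invoke dominated convergence on the Euler product to conclude $D_n(k_i)\to 1$; if $(k_i)$ is not rough, some $p$ divides infinitely many $k_i$, and $D_n(k_i)\leq D_n(p)<1$ along that subsequence. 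The totally divisible case is parallel: $v_p(k_i)\to\infty$ for each $p$ drives every local factor to $(1-p^{-(n-1)})^n$ and dominated convergence gives $1/\zeta(n-1)^n$; otherwise some $v_p(k_i)$ stays bounded, strict monotonicity keeps that local factor strictly above $(1-p^{-(n-1)})^n$, and $D_n(k_i)$ stays strictly above $1/\zeta(n-1)^n$ along the subsequence. Finally, uniform convergence $D_n(k)\to 1$ as $n\to\infty$ follows from $D_n(k)\geq 1/\zeta(n-1)^n$ together with the elementary bound $\zeta(n-1)^n\to 1$.

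The main obstacle is the strict monotonicity of $A_m/B_m$: the other steps are formal manipulations or standard dominated-convergence arguments, but this monotonicity is a genuine positivity statement about coefficients of a product of rational generating functions with $n-1$ distinct geometric poles, and will likely require either a clever combinatorial identity or a careful induction exploiting the pole structure of $F$.
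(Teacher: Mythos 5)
Your overall structure matches the paper's: establish multiplicativity of $D_n$ via Dirichlet convolutions of multiplicative functions, reduce to the local analysis at each prime $p$, compute $\lim_{m\to\infty}D_n(p^m)=(1-p^{-(n-1)})^n$, establish strict monotonicity of $m\mapsto D_n(p^m)$, and then combine these via dominated convergence on the Euler product. Your observation that the $i=1$ factor forces $d_1=1$ and that the remaining inner sums are Jordan totients is correct and is an equivalent rephrasing of the paper's identity $a_n'=\mu^{*n}*a_n$. Your limit computation via the dominant pole of $F(x)=\prod_{i=2}^n(1-p^{i-1}x)^{-1}$ is correct and is a cleaner route than the paper's recursion in Lemma \ref{limit_pm}. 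The dominated-convergence and ``if and only if'' arguments are fine given the local facts.

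However, there is a genuine gap at exactly the place you flag: the strict monotonicity of $D_n(p^m)=A_m/B_m$ in $m$. You list two candidate methods --- partial-fraction expansion with direct polynomial comparison, or an induction in $m$ --- but neither is carried out, and neither is obviously viable as stated. In the partial-fraction expansion $F(x)=\sum_{i=2}^n\alpha_i(1-p^{i-1}x)^{-1}$ the coefficients $\alpha_i=\prod_{j\neq i}(1-p^{j-i})^{-1}$ alternate in sign, so after cross-multiplying $A_mB_{m-1}<A_{m-1}B_m$ you do not get a manifestly sign-definite polynomial in $p$; there are genuine cancellations to control. The paper's proof (Proposition \ref{decreasing}) of this step is a substantial piece of work: it reduces strict monotonicity of $D_n(p^m)$ to logarithmic concavity of $m\mapsto a_n'(p^m)$, invokes Menon's convolution theorem (Theorem \ref{menonlemma}) that the convolution of positive log-concave sequences is log-concave, shows in Lemma \ref{convolemma} that the pairwise factors $(M\star P_i)\star(M\star P_j)$ are log-concave when $0<i<j$ but \emph{not} when $i=0$, and therefore must supply explicit base cases for $n=4,5$ (Appendix \ref{appxfourandfive}) together with a separate direct argument for $n=2,3$ (Appendix \ref{appxtwoandthree}), where log-concavity of $a_n'(p^m)$ actually fails at $m=1$ for $p=2$. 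So this monotonicity is not a formal manipulation; it needs a real structural idea plus symbolic verification of several base cases, and your proposal currently only gestures at it. Until you supply an argument for this step, every use of strict monotonicity downstream (the strict inequality $D_n(k)<1$ for $k\neq 0,1$, the reverse directions of both ``if and only if'' claims) remains unproved.
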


\begin{Remark} \label{partial_limits_remark}
    Given an integer sequence $k_1,k_2,\ldots$, write $k_i=\pm\prod_p p^{m_p(i)}$ for the prime decomposition of $k_i$ for nonzero $k_i$, and otherwise formally define $m_p(i)=\infty$ for all $p$ if $k_i$ is zero. For $n\geq 3$, it follows from Theorem \ref{thm_density} that the limit $\lim_{i\to\infty} D_n(k_i)$ exists and is equal to
  $
      \prod_{p}\lim_{i\to\infty}D_n(p^{m_p(i)})
  $
  where the product extends over all primes $p$, whenever every sequence of prime exponents $(m_p(1), m_p(2),\ldots)$ is either eventually constant or tends to $\infty$.
\end{Remark}

We prove Theorem \ref{thm_density} for nonzero $k_i$, but it is interesting that this formulation holds for $k=0$ also. The case of $k=0$ was proved by Wigman \cite{Wigman}, where he found that $D_n(0)$ equals $1/\zeta(n-1)^n$. 
We remark that Theorem \ref{thm_density} implies that 
\[D_n(k_i)\to D_n(0)\]
if and only if $(k_1,k_2,\ldots)$ is totally divisible, for any fixed $n\geq 3$.

For completeness, let us state what happens in the rather different case $n=2$.
\begin{Proposition} \label{density2}
  Let $n=2$. Then $D_n$ is a multiplicative function, and $D_n(p^m)$ is strictly decreasing as a function of $m$ for any prime $p$. We have
    \[D_2(k_i)\to 0\]
  if and only if $\lim_{i\to\infty}\sum_{p|k_i}1/p\to\infty$. Moreover,
    \[D_2(k_i)\to 1\]
  if and only if $\lim_{i\to\infty}\sum_{p|k_i}1/p\to0$. The sums are taken over all primes $p$ which divide $k_i$.
\end{Proposition}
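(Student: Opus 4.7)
\medskip

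\noindent\textbf{Proof plan for Proposition \ref{density2}.}

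The plan is to evaluate the constants $c_{2,k}$ and $c'_{2,k}$ explicitly using the formulas stated in the introduction, reducing $D_2(k)$ to a ratio of classical arithmetic functions, and then to derive the multiplicativity, monotonicity, and asymptotic statements from this closed form. For $n=2$, the inner sums in the formula for $c'_{n,k}$ simplify: the $i=1$ factor $\sum_{g\mid d_1}\mu(g)$ equals $[d_1=1]$, so only the term $d_1=1$, $d_2=|k|$ survives, and the $i=2$ factor becomes $\sum_{g\mid|k|}\mu(g)(|k|/g)=\varphi(|k|)$. The corresponding computation for $c_{2,k}$ gives $\sum_{d\mid|k|}d=\sigma(|k|)$. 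Hence
\[
D_2(k)\;=\;\frac{\varphi(|k|)}{\sigma(|k|)}.
\]
Multiplicativity of $D_2$ then follows immediately from multiplicativity of $\varphi$ and $\sigma$.

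For the monotonicity, I would insert the prime-power values $\varphi(p^m)=p^{m-1}(p-1)$ and $\sigma(p^m)=(p^{m+1}-1)/(p-1)$ to obtain
\[
D_2(p^m)=\frac{p^{m-1}(p-1)^2}{p^{m+1}-1},
\]
and then compute the ratio $D_2(p^{m+1})/D_2(p^m)=p(p^{m+1}-1)/(p^{m+2}-1)$. The inequality $p(p^{m+1}-1)<p^{m+2}-1$ reduces to $p>1$, so $D_2(p^m)$ is strictly decreasing in $m$.

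For the asymptotic characterization, the key step is a two-sided bound of the form
\[
\frac{c_1}{p}\;\le\;-\log D_2(p^m)\;\le\;\frac{c_2}{p},\qquad\text{for all primes $p$ and all $m\ge 1$,}
\]
for absolute constants $c_1,c_2>0$. The lower bound follows from monotonicity: $-\log D_2(p^m)\ge -\log D_2(p)=\log\!\bigl((p+1)/(p-1)\bigr)\asymp 1/p$. The upper bound follows from the limiting value as $m\to\infty$: $-\log D_2(p^m)\le -2\log(1-1/p)\asymp 1/p$. By multiplicativity,
\[
-\log D_2(k_i)\;=\;\sum_{p\mid k_i}-\log D_2\!\bigl(p^{v_p(k_i)}\bigr),
\]
so the displayed bounds give $-\log D_2(k_i)\asymp \sum_{p\mid k_i}1/p$ with absolute implied constants. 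The two equivalences in the proposition then read off directly: $D_2(k_i)\to 1$ iff $-\log D_2(k_i)\to 0$ iff $\sum_{p\mid k_i}1/p\to 0$, and $D_2(k_i)\to 0$ iff $-\log D_2(k_i)\to\infty$ iff $\sum_{p\mid k_i}1/p\to\infty$.

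The only nonroutine point is establishing the uniform (in $m$) lower and upper bounds $c_1/p\le -\log D_2(p^m)\le c_2/p$; everything else is symbol pushing. Since monotonicity in $m$ pins $-\log D_2(p^m)$ between $-\log D_2(p)$ and $-2\log(1-1/p)$, both of which are exactly $2/p+O(1/p^2)$ for large $p$ and bounded away from $0$ for small $p$, the bounds follow with very little work, so I do not anticipate a real obstacle.
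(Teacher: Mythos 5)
Your proof is correct and takes essentially the same approach as the paper: compute $D_2(p^m)$ explicitly, observe monotonicity, sandwich $D_2(p^m)$ between $(1-1/p)^2$ and $1-1/p$, and use multiplicativity to read off the two characterizations. The identification $D_2(k)=\varphi(|k|)/\sigma(|k|)$ is a clean repackaging of the paper's formula $D_2(p^m)=(1-1/p)^2/(1-1/p^{m+1})$, and phrasing the final step via $-\log D_2(k_i)\asymp\sum_{p\mid k_i}1/p$ rather than the paper's direct product bounds $\bigl[\prod_{p\mid k}(1-1/p)\bigr]^2\le D_2(k)\le\prod_{p\mid k}(1-1/p)$ is a cosmetic variation.
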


In light of Remark \ref{partial_limits_remark}, one may ask which values in the interval $[D_n(0),1]$ can be obtained as partial limits of the function $D_n$. In this direction, we have the following result.

\begin{Proposition} \label{densitydensity}
   For $n\geq 4$, the set of values of $D_n(k)$ as $k$ ranges over $\Z$ is not dense in in the interval $[D_n(0),1]$. For $n=2$, the set of values of $D_2(k)$ as $k$ ranges over $\Z$ is dense in the interval $[0,1]$.
\end{Proposition}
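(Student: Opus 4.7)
Both parts rest on the multiplicativity of $D_n$ from Theorem \ref{thm_density}, so $D_n(k)=\prod_{p}D_n(p^{m_p})$ where $m_p$ denotes the $p$-adic valuation of $k$. The argument needs two $p$-local facts. First, plugging $k=p$ into the formulas for $c'_{n,k}$ and $c_{n,k}$ gives
\[D_n(p)=1-\frac{n(p-1)}{p^{n}-1}.\]
Second, the limit
\[D_n(p^{\infty}):=\lim_{m\to\infty}D_n(p^{m})=\bigl(1-p^{-(n-1)}\bigr)^{n},\]
which one obtains by a direct coefficient extraction from the generating-function expressions for the numerator and denominator of $D_n(p^m)$, and which is consistent with $\prod_{p}D_n(p^\infty)=\prod_p(1-1/p^{n-1})^n=1/\zeta(n-1)^{n}=D_n(0)$.

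For non-density when $n\geq 4$, I fix the prime $p=2$. By multiplicativity together with the strict monotonicity of $D_n(p^m)$ in $m$ from Theorem \ref{thm_density}, every $k\in\Z$ satisfies one of two alternatives: either $2\mid k$, so $D_n(k)\leq D_n(2^{m_2})\leq D_n(2)$ (all other factors being at most $1$); or $2\nmid k$, so $D_n(k)=\prod_{q\neq 2}D_n(q^{m_q})>\prod_{q\neq 2}D_n(q^\infty)=D_n(0)/D_n(2^\infty)$, strictly because only finitely many $m_q$ are nonzero. Hence the open interval $(D_n(2),\,D_n(0)/D_n(2^\infty))$ contains no value of $D_n$, and it remains to verify this interval is non-empty for $n\geq 4$. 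Setting $\zeta_o(s):=(1-2^{-s})\zeta(s)=\sum_{k\geq 1,\,k\text{ odd}}k^{-s}$, the required inequality reads $(1-n/(2^n-1))\,\zeta_o(n-1)^n<1$. For $n=4$ this follows by direct evaluation (the product is approximately $0.898$). For $n\geq 5$, the bound $\zeta_o(n-1)-1\leq 2/3^{n-1}$ gives $\zeta_o(n-1)^n\leq\exp(2n/3^{n-1})$, and combined with $-\log(1-n/(2^n-1))\geq n/(2^n-1)$ the inequality reduces to $2(2^n-1)<3^{n-1}$, which holds because $(3/2)^{n-1}>4$ in that range. The main (modest) obstacle is ensuring uniformity in $n\geq 4$; notice that the same inequality fails for $n=3$ (where a numerical check gives $D_3(2)D_3(2^\infty)>D_3(0)$), consistent with the proposition's exclusion of that case.

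For density when $n=2$, multiplicativity together with $D_2(p)=(p-1)/(p+1)$ identifies $\{D_2(k):k\text{ squarefree}\}$ with $\{\prod_{p\in S}(p-1)/(p+1):S\text{ a finite set of primes}\}$. Taking logarithms, density of $\{D_2(k)\}$ in $(0,1]$ reduces to density of finite subset-sums of $a_p:=\log((p+1)/(p-1))$ in $[0,\infty)$. Since $a_p\sim 2/p\to 0$ while $\sum_p a_p=\infty$ by Mertens' theorem, a standard greedy construction works: enumerate primes in increasing order and include $p$ in $S$ whenever doing so keeps the running sum $\leq T$; the resulting monotone bounded sequence must converge to $T$, for otherwise the divergence of the tail $\sum_{p>P}a_p$ would force further inclusions. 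Combined with the fact that $D_2(k_i)\to 0$ for sequences with $\sum_{p\mid k_i}1/p\to\infty$ (Proposition \ref{density2}), the values of $D_2$ are dense in $[0,1]$. I do not anticipate any significant obstacle in this part.
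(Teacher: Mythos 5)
Your proof is correct and follows essentially the same route as the paper: for $n\geq 4$, exhibit the gap $(D_n(2),\,D_n(0)/D_n(2^\infty))$ by splitting on whether $2\mid k$, and verify the interval is nonempty via a Taylor/log estimate (your reformulation via $\zeta_o$ is tidy and pushes the analytic bound down to $n\geq 5$, leaving only $n=4$ for direct evaluation, whereas the paper checks $n=4,5$ numerically); for $n=2$, take logarithms, use $\sum_p a_p=\infty$ with $a_p\to0$, and run a greedy subset-sum argument, exactly as in the paper.
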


Section \ref{chapter_density} is dedicated to the proofs of Theorem \ref{thm_density}, Proposition \ref{density2} and Proposition \ref{densitydensity}.

\subsection{Proof outline of Theorem \ref{main_theorem}} \label{section_proof_ideas}
Our proof of Theorem \ref{main_theorem} uses essentially the same approach as \cite{DRS}.
The set $M'_{n,k}$ is partitioned into a finite number of orbits $A\SL_n(\Z)$, where $A\in M_{n,k}$ are matrices in Hermite normal form with primitive row vectors. We count the matrices in each orbit separately. The number of matrices in each orbit scales as a fraction $1/k^{n-1}$ of the number of matrices in $\SL_n(\Z)$. We can view $\SL_n(\Z)$ as a lattice in the space $\SL_n(\R)$, and the problem is reduced to a lattice point counting problem. The lattice points inside the ball $B_T$ are counted by evaluating the normalized Haar measure of $B_T\cap\SL_n(\R)$. 

\section{Preliminaries}

The \dfn{Riemann zeta function} $\zeta$\index{$\zeta$, Riemann zeta function} is given by
\[\zeta(s)\defeq \sum_{n=1}^\infty \dfrac1{n^s}=\prod_p\dfrac1{1-1/p^s}\]
for $\Re s>1$, where we use the convention that when an index $p$ is used in a sum or product, it ranges over the set of primes.

The \dfn{Möbius function}\index{$\mu$, Möbius function} $\mu$ is defined by
$\mu(k)\defeq (-1)^m$ if $k$ is a product of $m$ distinct prime factors (that is, $k$ is \dfn{square-free}),\index{square-free} and $\mu(k)\defeq 0$ otherwise. We note that $\mu$ is a \dfn{multiplicative function}, that is, a function ${f:\N^*\to\C}$ defined on the positive integers such that $f(ab)=f(a)f(b)$ for all coprime $a,b$. 

We will use the fact that $\SL_n(\R)=M_{n,1}$ has a normalized Haar measure $\nu$ which is bi-invariant (see \cite{MeanValueTheorem}).

\subsection{Lattice point counting} \label{section_lattice_point_counting}
Let $G$ be a topological group with a normalized Haar measure $\nu_G$ and a lattice $\Gamma\subseteq G$, and let $G_T$ be an increasing family of bounded subsets of $G$ for all $T\geq 1$. Under certain conditions (see for instance \cite{GorodnikNevo}), we have
\[\abs{G_T\cap\Gamma} \sim \nu_G(G_T\cap G),\]
where we by $f(T)\sim g(T)$ mean that $f(T)/g(T)\to 1$ as $T\to\infty$. 
In this paper, we are interested in the lattice $\SL_n(\Z)$ inside $\SL_n(\R)$, and the following result will be crucial.
\begin{Theorem}[\!\!\cite{DRS}, Theorem 1.10] \label{drs_lemma}
Let $B_T$ be the ball of radius $T$ in the space $M_n(\R)$ of real $n\times n$-matrices under the Euclidean norm $\|A\|=\sqrt{\tr(A\T A)}$. Let $\nu$ be the normalized Haar measure of $\SL_n(\R)$. Then
\begin{gather}
  \abs{B_T\cap \SL_n(\Z)} = \nu(B_T\cap \SL_n(\R)) +\BigO_\e(T^{n(n-1)-1/(n+1)+\e})
\end{gather}
for all $\e>0$, and the main term is given by
\[|B_T\cap \SL_n(\Z)| \sim C_1T^{n(n-1)}, \quad C_1=\dfrac1{\zeta(2)\cdots\zeta(n)}\dfrac{\pi^{n^2/2}}{\Gamma\left(\dfrac n2\right)\Gamma\left(\dfrac{n(n-1)}2+1\right)}.\]
\end{Theorem}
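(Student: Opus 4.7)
The plan is to follow the smoothing-and-spectral-gap strategy for counting lattice points in a semisimple Lie group, with $G=\SL_n(\R)$ and $\Gamma=\SL_n(\Z)$, as carried out by Duke, Rudnick and Sarnak. First I would establish the main-term asymptotic $\nu(B_T\cap G)\sim C_1 T^{n(n-1)}$ by an explicit geometric computation. Using the polar decomposition $g=k_1 a k_2$ with $k_i\in\SO_n(\R)$ and $a=\op{diag}(a_1,\ldots,a_n)$ satisfying $a_1\geq\cdots\geq a_n>0$ and $\prod a_i=1$, one has $\|g\|^2=\tr(a\T a)=\sum a_i^2$, while the Haar measure factors as the product of normalized measures on the two $\SO_n$-factors and the weighted measure on $A^+$ with Jacobian $\prod_{i<j}(a_i^2-a_j^2)$. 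Integrating the indicator of $\{\|g\|\leq T\}$ and noting that the dominant contribution comes from the region $a_1\asymp T$ with the remaining $a_i$ bounded recovers both the exponent $n(n-1)$ and the constant $C_1$. The factor $1/(\zeta(2)\cdots\zeta(n))$ enters as the covolume of $\SL_n(\Z)$ in $\SL_n(\R)$ relative to the Mahler--Siegel normalization on $G/\Gamma$.

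For the error term, let $\psi_\epsilon\geq 0$ be a smooth bump function supported in a Riemannian $\epsilon$-ball about the identity in $G$, normalized so that $\int_G\psi_\epsilon\,d\nu=1$. Form the $\Gamma$-periodization $\widetilde\psi_\epsilon(g\Gamma)\defeq\sum_{\gamma\in\Gamma}\psi_\epsilon(g\gamma)$ and the ``pulled-up'' counting function $F_T(g\Gamma)\defeq\sum_{\gamma\in\Gamma}\chi_{B_T}(g\gamma)$ on $\Gamma\bs G$. Unfolding yields
\[
\langle F_T,\widetilde\psi_\epsilon\rangle_{\Gamma\bs G}=\int_G\chi_{B_T}(g)\psi_\epsilon(g)\,d\nu(g).
\]
A \emph{well-roundedness} estimate $\nu(B_{T+\epsilon}\setminus B_{T-\epsilon})\ll\epsilon\, T^{n(n-1)}$, which follows directly from the polar-integration bounds in the main-term step, implies that this pairing differs from $F_T(e\Gamma)=|B_T\cap\Gamma|$ by at most $\BigO(\epsilon T^{n(n-1)})$.

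Finally, I would exploit the spectral decomposition $L^2(\Gamma\bs G)=\C\mathbf{1}\oplus L^2_0(\Gamma\bs G)$. The projection onto the constants contributes exactly $\nu(B_T\cap G)$, the desired main term, while the orthogonal remainder $F_T^{0}$ is pairing-estimated against $\widetilde\psi_\epsilon$ using matrix-coefficient decay on $L^2_0$. For $n\geq 3$, Kazhdan's property (T) supplies a uniform spectral gap, and a quantitative decay bound (combined with a Sobolev inequality applied to $\psi_\epsilon$, whose Sobolev norms grow like $\epsilon^{-N/2}$) gives $|\langle F_T^{0},\widetilde\psi_\epsilon\rangle|\ll\epsilon^{-N/2}T^{n(n-1)/2}$. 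Combining the two sources of error yields
\[
\bigl||B_T\cap\Gamma|-\nu(B_T\cap G)\bigr|\ll\epsilon\, T^{n(n-1)}+\epsilon^{-N/2}T^{n(n-1)/2},
\]
and optimizing $\epsilon=T^{-a}$ produces the stated error $\BigO_\e(T^{n(n-1)-1/(n+1)+\e})$. The principal obstacle is securing the precise Sobolev exponent $N$ needed to recover exactly the rate $1/(n+1)$: this demands the sharpest available matrix-coefficient bounds for $G$ acting on $L^2_0(\Gamma\bs G)$, and is the technical heart of the argument (for $n=2$, property (T) fails and one must substitute Selberg-type spectral gap bounds such as $\lambda_1\geq 3/16$).
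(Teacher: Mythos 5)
This is a cited external result, not one proved in the paper: the statement is imported verbatim as \cite{DRS}, Theorem 1.10 and used as a black box, so there is no internal proof to compare against. Your sketch does reproduce the broad architecture of the DRS-style lattice-point count --- compute $\nu(B_T\cap G)$ via the Cartan decomposition, smooth the indicator of $B_T$ by a bump $\psi_\epsilon$, decompose $L^2(\Gamma\backslash G)$ into constants plus $L^2_0$, and optimize the two error sources --- and the main-term computation you outline is essentially right. (A small calibration: on $\SL_n$ the extra factor $\prod_{i<j}a_ia_j=(\prod_i a_i)^{n-1}=1$, so the stated Jacobian $\prod_{i<j}(a_i^2-a_j^2)$ is indeed correct up to the constant $2^{-n(n-1)/2}$; also, perturbing $g$ by a Riemannian $\epsilon$-ball about the identity changes $\|g\|$ multiplicatively, so the well-roundedness estimate should read $\nu(B_{(1+\epsilon)T}\setminus B_{(1-\epsilon)T})\ll\epsilon T^{n(n-1)}$.)

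The genuine gap is in the spectral error term. The displayed bound $|\langle F_T^{0},\widetilde\psi_\epsilon\rangle|\ll\epsilon^{-N/2}T^{n(n-1)/2}$ is asserted, not derived: the factor $T^{n(n-1)/2}$ would require showing $\|F_T^0\|_{L^2(\Gamma\backslash G)}\ll T^{n(n-1)/2}$, but $\|F_T\|_{L^2}^2=\sum_{\gamma\in\Gamma}\nu(B_T\cap B_T\gamma^{-1})$ has many nonzero terms beyond $\gamma=e$, and a Cauchy--Schwarz bound of this shape does not follow from the polar-coordinate estimates you set up. Moreover, the mechanism you invoke --- a uniform spectral gap from property (T), a Sobolev bound on $\psi_\epsilon$, and optimization of $\epsilon$ --- is really the Eskin--McMullen/Gorodnik--Nevo template, which the paper quotes separately as Theorem \ref{gn_cor} and which yields the weaker exponent $1/(2n)$. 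Extracting DRS's sharper exponent $1/(n+1)$ is not a matter of inserting ``the right Sobolev exponent $N$'' into that template; it rests on the finer structure of the automorphic spectrum of $\SL_n(\Z)\backslash\SL_n(\R)$, and you correctly identify this as the technical heart of the argument but do not supply it. As written the proposal would, at best, reprove the paper's Theorem \ref{gn_cor} rather than the stronger rate claimed in Theorem \ref{drs_lemma}.
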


In fact, a slightly more general statement is true. We can replace the balls $B_T$ in Theorem \ref{drs_lemma} with balls under any norm on $M_n(\R)$, and the asymptotics will still hold, save for a slighty worse exponent in the error term.

\begin{Theorem}[\!\!\cite{GorodnikNevo}, Corollary 2.3] \label{gn_cor}
Let $\|\cdot\|'$ be any norm on the vector space $M_n(\R)$, and let $G_T$ be the ball of radius $T$ in $M_n(\R)$ under this norm. Let $\nu$ be the normalized Haar measure of $\SL_n(\R)$. Then
\[|G_T\cap \SL_n(\Z)| = \nu(G_T\cap \SL_n(\R)) + \BigO_\e(T^{n(n-1)-1/(2n)+\e})\]
for all $\e>0$.
\end{Theorem}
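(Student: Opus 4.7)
The plan is to bootstrap Theorem \ref{drs_lemma} (which treats the Euclidean case) to the case of an arbitrary norm via a smoothing-and-equidistribution argument. Since all norms on $M_n(\R)$ are equivalent, there exist constants $0<c_1\le c_2$ with $c_1\|A\|'\le\|A\|\le c_2\|A\|'$ for all $A\in M_n(\R)$, so $G_T$ is a symmetric convex body sandwiched between two Euclidean balls of radius $\asymp T$. In particular a crude sandwich bound already yields $|G_T\cap\SL_n(\Z)|\asymp T^{n(n-1)}$; the content of the theorem is the sharp main term with a power-saving error.

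Introduce a scale parameter $\delta=\delta(T)>0$ to be optimized, and choose a smooth nonnegative bump function $\psi_\delta$ on $\SL_n(\R)$ supported in a neighborhood of the identity of Haar volume comparable to $\delta^{\dim\SL_n(\R)}$, normalized so that $\int\psi_\delta\,d\nu=1$. The smoothed count
\[\tilde N_T^\delta\defeq\sum_{\gamma\in\SL_n(\Z)}(\mathbf{1}_{G_T\cap\SL_n(\R)}*\psi_\delta)(\gamma)\]
will be compared both to the sharp count $|G_T\cap\SL_n(\Z)|$ and to the target main term $\nu(G_T\cap\SL_n(\R))$. The first comparison is a well-roundedness estimate: the difference $|G_T\cap\SL_n(\Z)|-\tilde N_T^\delta$ is bounded by the number of lattice points in the fuzzy boundary of $G_T$ under right multiplication by the support of $\psi_\delta$. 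By convexity of $G_T$ together with the sandwich inclusions, this fuzzy boundary lies inside a Euclidean annulus of relative thickness $O(\delta)$, and applying Theorem \ref{drs_lemma} to the two bounding Euclidean balls contributes an error of order $\delta\,T^{n(n-1)}+T^{n(n-1)-1/(n+1)+\e}$.

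The second comparison, between $\tilde N_T^\delta$ and $\nu(G_T\cap\SL_n(\R))$, is where quantitative equidistribution of the $\SL_n(\R)$-action on $\SL_n(\R)/\SL_n(\Z)$ enters. Unfolding the convolution rewrites $\tilde N_T^\delta$ as an average of translated lattice counts against $\psi_\delta$, and the spectral gap for this action (equivalently, decay of matrix coefficients in the quasi-regular representation on $L^2(\SL_n(\R)/\SL_n(\Z))$) yields a bound of the form $\|\psi_\delta\|_{\mathrm{Sob}}\cdot T^{n(n-1)-\kappa}$ for some fixed $\kappa>0$ depending only on $n$. The Sobolev norms of $\psi_\delta$ degrade as $\delta^{-N}$ for some explicit $N=N(n)$. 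One then chooses $\delta=T^{-\beta}$ with $\beta=\kappa/(N+1)$ to balance the two errors; the arithmetic of the spectral gap together with the Sobolev degree works out so that this balance produces the exponent $1/(2n)$, giving the stated error $\BigO_\e(T^{n(n-1)-1/(2n)+\e})$.

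The main obstacle is the equidistribution step: producing the rate with the correct exponent. This requires representation-theoretic input (explicit quantitative decay of matrix coefficients of unitary representations of $\SL_n(\R)$ appearing in $L^2(\SL_n(\R)/\SL_n(\Z))$ away from the constants), together with Sobolev-type smoothing estimates that convert $L^2$ decay into the pointwise statement needed at the identity. The final exponent $1/(2n)$ encodes the balance between the polynomial volume growth $T^{n(n-1)}$ of norm balls in $\SL_n(\R)$ and the strength of the spectral gap, and a careful accounting of Sobolev orders is what prevents one from recovering the sharper Euclidean exponent $1/(n+1)$ in the general-norm setting.
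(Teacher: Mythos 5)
This statement is imported verbatim from Gorodnik--Nevo (it is labeled as their Corollary 2.3) and the paper offers no proof of it; the paper's own contribution begins only with Corollary \ref{drs_cor}, which specializes the norm to $\|X\|'=\|A^{-1}X\|$. So there is no in-paper argument to compare against, and I can only assess your sketch on its own terms.

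Your architecture --- smooth $\mathbf{1}_{G_T}$ by a bump at the identity, control the fuzzy boundary by well-roundedness, estimate the smoothed sum by a quantitative mean ergodic theorem coming from the spectral gap --- is the right framework, and it is essentially the Gorodnik--Nevo strategy. The gap is in your well-roundedness step. You claim the fuzzy boundary of $G_T$ lies in a Euclidean annulus of relative thickness $O(\delta)$ and propose to bound its lattice count by differencing two applications of Theorem \ref{drs_lemma}. This is false in general: from $c_1\|A\|'\le\|A\|\le c_2\|A\|'$ the thin $\|\cdot\|'$-annulus $\{T(1-\delta)\le\|A\|'\le T(1+\delta)\}$ is only trapped between Euclidean balls of radii $c_1T(1-\delta)$ and $c_2T(1+\delta)$, an annulus of thickness $\asymp(c_2-c_1)T$. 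Differencing DRS on those two Euclidean balls then produces a quantity of size $\asymp T^{n(n-1)}$, the full main term, which is useless as an error bound. What is actually required is a \emph{Haar-volume} Lipschitz estimate for the family $\{G_T\cap\SL_n(\R)\}$ itself, namely
\[
\nu\bigl(G_{T(1+\delta)}\cap\SL_n(\R)\bigr)-\nu\bigl(G_{T(1-\delta)}\cap\SL_n(\R)\bigr)=O\bigl(\delta\,T^{n(n-1)}\bigr),
\]
valid uniformly for the given norm. This is proved independently of any lattice counting (e.g.\ via the Cartan decomposition and the structure of the Haar measure), and it is exactly the ``admissibility/well-roundedness'' hypothesis in Gorodnik--Nevo's general counting theorem. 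Without it your argument is circular: you would be estimating the lattice count in a $\|\cdot\|'$-ball using a count in a $\|\cdot\|'$-annulus, which presupposes the very volume control you are trying to establish. Finally, as you yourself flag, the assertion that the bookkeeping ``works out'' to the exponent $1/(2n)$ is not derived; that number comes from a concrete choice of spectral-gap parameter for $\SL_n$ together with the Sobolev order of $\psi_\delta$ (which scales with $\dim\SL_n(\R)=n^2-1$), and the balance of those two inputs has to be carried out explicitly to land on $1/(2n)$ rather than some other positive saving.
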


We will be interested in the following particular case of Theorem \ref{gn_cor}. Let $A\in M_{n,k}$. Then $\|X\|'\defeq \|A\inv X\|$ defines a norm on $M_n(\R)$, and the ball of radius $T$ in $M_n(\R)$ under the norm $\|\cdot\|'$ is $A\cdot B_T$.

\begin{Corollary} \label{drs_cor}
Let $A\in M_{n,k}$. Then
\[|AB_T\cap \SL_n(\Z)|=\nu(AB_T\cap \SL_n(\R))+\minfelterm\]
for all $\e>0$, using the notation from Theorem \ref{drs_lemma}.
\end{Corollary}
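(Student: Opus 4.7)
The plan is to reduce the claim directly to Theorem \ref{gn_cor} by exhibiting $AB_T$ as a ball under an appropriate norm on $M_n(\R)$. The paragraph preceding the corollary already indicates this construction; what remains is to spell it out.

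Concretely, since $\det A = k \neq 0$, the matrix $A$ is invertible over $\R$, so setting $\|X\|' \defeq \|A\inv X\|$ for $X \in M_n(\R)$ gives a well-defined function. I would first verify that it is a norm: non-degeneracy uses that $A\inv$ has trivial kernel, while absolute homogeneity and the triangle inequality follow immediately from the corresponding properties of the Euclidean norm $\|\cdot\|$ composed with the linear map $A\inv$.

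Next, a one-line computation identifies the $\|\cdot\|'$-ball of radius $T$ with $AB_T$: the condition $\|X\|' \leq T$ is equivalent to $A\inv X \in B_T$, and hence to $X \in AB_T$. Applying Theorem \ref{gn_cor} to the norm $\|\cdot\|'$ and the corresponding ball $G_T = AB_T$ then yields
\[|AB_T \cap \SL_n(\Z)| = \nu(AB_T \cap \SL_n(\R)) + \minfelterm\]
for all $\e > 0$, which is exactly the statement of the corollary.

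There is no real obstacle here: the entire content of the corollary is the change-of-norm observation, with Theorem \ref{gn_cor} doing the actual lattice-point counting. The only point worth checking carefully is that $\SL_n(\Z)$ is a lattice in $\SL_n(\R)$ rather than in all of $M_n(\R)$, but since $AB_T \cap \SL_n(\Z) = \bigl(AB_T \cap \SL_n(\R)\bigr) \cap \SL_n(\Z)$, this causes no issue when invoking Theorem \ref{gn_cor}.
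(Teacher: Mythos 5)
Your proof is correct and is essentially the same as the paper's: the paragraph preceding the corollary already introduces the norm $\|X\|' = \|A\inv X\|$ and identifies its radius-$T$ ball with $AB_T$, and the corollary is then a direct specialization of Theorem \ref{gn_cor}. You have merely spelled out those steps explicitly, which matches the paper's intent.
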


\section {The number of matrices with primitive rows \label{counting_problem}}
In the present section, we will prove Theorem \ref{main_theorem}. 
We begin by noting that the common divisors of the entries of each row in an integer $n\times n$-matrix $A$ are preserved under multiplication on the right by any matrix $X\in\SL_n(\Z)$. 
In particular, if each row of $A$ is primitive, then each row of $AX$ is primitive, for any $X\in\SL_n(\Z)$. So we get:

\begin{Lemma}
If $A\in M'_{n,k}$ then $AX\in M'_{n,k}$ for all $X\in\SL_n(\Z)$. Thus $A\cdot \SL_n(\Z)\subseteq M'_{n,k}$.
\end{Lemma}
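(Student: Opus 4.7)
The statement is essentially a direct consequence of the remark made just before it, so the plan is to justify that remark carefully and then conclude. The two conditions defining $M'_{n,k}$ are: determinant equal to $k$, and primitive row vectors. Determinant is trivial: $\det(AX) = \det(A)\det(X) = k \cdot 1 = k$ for any $X \in \SL_n(\Z)$.

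The substantive step is preservation of row primitivity. Fix $A \in M'_{n,k}$ and $X \in \SL_n(\Z)$, and let $r_i$ denote the $i$-th row of $A$, so that the $i$-th row of $AX$ is $r_i X$. I would argue by contradiction: suppose some integer $d > 1$ divides every entry of $r_i X$, so that $r_i X = d v$ for some $v \in \Z^n$. Right-multiplying by $X^{-1}$, which again lies in $\SL_n(\Z) \subseteq M_n(\Z)$, gives $r_i = d (v X^{-1})$, where $v X^{-1}$ is an integer vector. Hence $d$ divides every entry of $r_i$, contradicting the primitivity of $r_i$. Thus each row $r_i X$ of $AX$ is primitive.

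Combining the two, $AX \in M'_{n,k}$ for every $X \in \SL_n(\Z)$, which gives the inclusion $A \cdot \SL_n(\Z) \subseteq M'_{n,k}$. There is no real obstacle here; the only subtlety worth flagging is that the argument uses $X^{-1}$ being integral, which relies crucially on $X \in \SL_n(\Z)$ (not merely $\GL_n(\Q)$ with determinant $\pm 1$, etc.). Symmetrically, the same argument applied to $X^{-1}$ shows that right multiplication by $\SL_n(\Z)$ also cannot \emph{create} primitivity from a non-primitive row, so in fact primitivity of rows is an $\SL_n(\Z)$-invariant property, though only the one direction stated is needed for the lemma.
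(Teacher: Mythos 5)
Your proof is correct and follows essentially the same route as the paper: the paper simply observes that the set of common divisors of each row's entries is invariant under right-multiplication by $\SL_n(\Z)$ (because $X^{-1}$ is integral), and you make this precise by the contradiction argument with $r_i = d\,(vX^{-1})$, plus you explicitly note the determinant condition. Nothing to add.
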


Consequently $M'_{n,k}$ may be written as a disjoint union of orbits of $\SL_n(\Z)$: \[M'_{n,k}=\bigcup_{A\in \mathcal A}A\SL_n(\Z),\]
for properly chosen subsets $\mathcal A$ of $M'_{n,k}$. In fact, as we will show in the following, the number of orbits is finite, and so we may take $\mathcal A$ to be finite.

A lower triangular integer matrix
  \[C\defeq \mat{
  c_{11}&0&\cdots&0\\
  c_{21}&c_{22}&\ddots&0 \\
  \vdots&&\ddots&0\\
  c_{n1}&\cdots&c_{n(n-1)}&c_{nn}
  }
  \]
is said to be in (lower) \dfn{Hermite normal form} if $0<c_{11}$ and $0\leq c_{ij}<c_{ii}$ for all $j<i$.
The following result is well-known.

\begin{Lemma}[\!\!\cite{Cohen}, Theorem 2.4.3] \label{lemma_hnf}
Assume $k>0$. Given an arbitrary matrix $A\in M_{n,k}$, the orbit $A\SL_n(\Z)$ contains a unique matrix  $C$ in Hermite normal form.
\end{Lemma}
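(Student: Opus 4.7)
The plan is to establish existence by an explicit column-reduction algorithm and uniqueness by a row-by-row comparison of two HNF matrices in the same orbit.

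For existence, I would construct $X\in\SL_n(\Z)$ bringing $A$ into lower HNF by processing the rows of $A$ from top to bottom. On row $i$, applying the extended Euclidean algorithm to the entries in columns $i,\ldots,n$ produces a matrix in $\SL_{n-i+1}(\Z)$ (padded to $\SL_n(\Z)$ by the identity on the first $i-1$ coordinates) that zeroes out entries to the right of the diagonal and places the $\gcd$ at position $(i,i)$. Then I would add integer multiples of column $i$ to columns $1,\ldots,i-1$ to reduce $c_{i,1},\ldots,c_{i,i-1}$ modulo $c_{ii}$. The inductive hypothesis guarantees these operations do not disturb earlier rows, which already vanish in columns $\geq i$. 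The only delicate step is arranging $c_{ii}>0$: the Euclidean reduction naturally yields $c_{ii}=\pm\gcd$, and a wrong sign can be absorbed by a column swap-and-negate among the free columns $i+1,\ldots,n$ (which lies in $\SL_n$); for the last row, positivity of $c_{nn}$ follows automatically from $c_{11}\cdots c_{nn}=\det(AX)=k>0$.

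For uniqueness, suppose $C_1=C_2X$ with both $C_1,C_2$ in HNF and $X\in\SL_n(\Z)$. Matching first rows yields $c^{(2)}_{11}(X_{11},\ldots,X_{1n})=(c^{(1)}_{11},0,\ldots,0)$, so $X_{1j}=0$ for $j\geq 2$; expanding $\det X=1$ along row $1$ then forces $X_{11}=\pm1$, and positivity of $c^{(1)}_{11}, c^{(2)}_{11}$ gives $X_{11}=1$ and $c^{(1)}_{11}=c^{(2)}_{11}$. The same argument applied successively to subsequent rows shows $X_{ij}=0$ for $j>i$ together with $X_{ii}=1$ and $c^{(1)}_{ii}=c^{(2)}_{ii}$; finally, for $j<i$ the identity $c^{(2)}_{ii}X_{ij}=c^{(1)}_{ij}-c^{(2)}_{ij}$ combined with the HNF constraint $0\leq c^{(\ell)}_{ij}<c^{(\ell)}_{ii}$ pins down $X_{ij}=0$ and $c^{(1)}_{ij}=c^{(2)}_{ij}$. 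Iterating yields $X=I$ and $C_1=C_2$.

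The main obstacle is essentially bookkeeping: keeping every column operation in $\SL_n(\Z)$ rather than $\GL_n(\Z)$, while simultaneously ensuring positivity of each $c_{ii}$ and the size constraint on below-diagonal entries. The result itself is classical, which is why the paper imports it from \cite[Theorem 2.4.3]{Cohen} instead of redoing the routine sign-tracking.
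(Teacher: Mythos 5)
Your proof is correct, and it is essentially the textbook argument that the paper delegates to Cohen rather than redoing. The paper offers no proof of its own --- it states the result and cites \cite[Theorem~2.4.3]{Cohen} --- so the useful comparison is with that source, and there is one genuine wrinkle worth flagging: Cohen works with the full unimodular group $\GL_n(\Z)$, whereas the lemma here is stated for $\SL_n(\Z)$-orbits. This is exactly the sign-tracking issue you call out. Two ways to discharge it: either (i) observe, as your existence step does implicitly, that the HNF representative has all diagonal entries positive, hence positive determinant, so when $k>0$ the $\GL_n(\Z)$-HNF representative automatically lies in the $\SL_n(\Z)$-orbit (this is the shortest bridge from Cohen's statement to the paper's); or (ii) give a self-contained $\SL_n$ algorithm as you do. Your mechanism for (ii) is sound: for $i<n$ a determinant $-1$ can be absorbed by negating some free column $j>i$ (which leaves already-processed rows untouched because those rows vanish in columns $>i$), and if $c_{ii}$ comes out negative you negate columns $i$ and $j$ together; for $i=n$ there is no spare column, and positivity of $c_{nn}$ is forced by $c_{11}\cdots c_{nn}=k>0$. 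The uniqueness half is also right, though compressed --- the clean ordering is: first show $X_{ij}=0$ for $j>i$ by running down the rows, then get $X_{ii}=1$ from $\prod X_{ii}=\det X=1$ with each $X_{ii}$ a positive integer, and only then run the modular-reduction argument for $j<i$ by an inner induction on $i-j$ so that the stray terms $c^{(2)}_{ik}X_{kj}$ with $j<k<i$ are already known to vanish before you invoke the bound $\lvert c^{(1)}_{ij}-c^{(2)}_{ij}\rvert<c^{(2)}_{ii}$. With that bookkeeping made explicit, the argument is complete.
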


We may thus write
\[M'_{n,k}=\bigcup_{i=1}^m A_i\SL_n(\Z),\]
where $A_1,\ldots,A_m$ are the unique matrices in Hermite normal form with primitive row vectors and determinant $k$, and $m\defeq|M'_{n,k}/\SL_n(\Z)|$. By counting the number of matrices in Hermite normal form with determinant $k>0$, we get

\begin{gather}
  |M_{n,k}/\SL_n(\Z)|=\sum_{d_1\cdots d_n=k} d_1^0d_2^1\cdots d_n^{n-1},  \label{formula_an}
\end{gather}
where the sum ranges over all positive integer tuples $(d_1,\ldots,d_n)$ such that $d_1\cdots d_n=~k$.

\begin{Proposition} \label{formula_anprime_hehe}
Let $k>0$. Then
\[|M'_{n,k}/\SL_n(\Z)|=
\sum_{d_1\cdots d_n=k}\prod_{i=1}^n \sum_{g|d_i}\mu(g)\left(\dfrac {d_i}g\right)^{i-1}
\]
where the first sum ranges over all positive integer tuples $(d_1,\ldots,d_n)$ such that $d_1\cdots d_n=k$.
\end{Proposition}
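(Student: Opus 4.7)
The plan is to use Lemma \ref{lemma_hnf} to reduce counting orbits to counting Hermite normal form matrices, and then apply M\"obius inversion row by row.

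By Lemma \ref{lemma_hnf}, every orbit in $M'_{n,k}/\SL_n(\Z)$ contains a unique representative in Hermite normal form, so $|M'_{n,k}/\SL_n(\Z)|$ equals the number of lower triangular matrices $C=(c_{ij})$ satisfying $c_{11}c_{22}\cdots c_{nn}=k$ with $c_{ii}>0$, $0\le c_{ij}<c_{ii}$ for $j<i$, and such that each row $(c_{i1},\ldots,c_{i,i-1},c_{ii},0,\ldots,0)$ is primitive. Write $d_i\defeq c_{ii}$. Since the entries in different rows can be chosen independently subject only to the constraints on the $i$-th row itself, the count factors as
\[
|M'_{n,k}/\SL_n(\Z)|=\sum_{d_1\cdots d_n=k}\prod_{i=1}^n P_i(d_i),
\]
where $P_i(d)$ is the number of tuples $(c_{i1},\ldots,c_{i,i-1})\in\{0,1,\ldots,d-1\}^{i-1}$ with $\gcd(c_{i1},\ldots,c_{i,i-1},d)=1$. (For $i=1$ the tuple is empty and $P_1(d)=1$ if $d=1$ and $0$ otherwise.)

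Next I would evaluate $P_i(d)$ by M\"obius inversion on the divisors of $d$. For each $g\mid d$, the number of tuples in $\{0,\ldots,d-1\}^{i-1}$ whose entries are all divisible by $g$ is $(d/g)^{i-1}$, since such tuples biject with arbitrary tuples in $\{0,\ldots,d/g-1\}^{i-1}$. Conversely, a tuple has $\gcd$ with $d$ equal to $1$ iff no prime divisor of $d$ divides every entry, so the standard inclusion-exclusion / M\"obius argument gives
\[
P_i(d)=\sum_{g\mid d}\mu(g)\left(\frac{d}{g}\right)^{i-1}.
\]
This handles $i=1$ correctly as well, since $\sum_{g\mid d}\mu(g)=[d=1]$. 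Substituting this expression into the factorization above yields precisely the formula in the statement.

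There is no substantive obstacle here; the only point that requires a moment of care is verifying that the primitivity condition on the $i$-th row of a lower triangular matrix in HNF depends only on $(c_{i1},\ldots,c_{i,i-1})$ and $d_i$ (the trailing zeros play no role in the gcd), so the rows can indeed be counted independently once the diagonal $(d_1,\ldots,d_n)$ is fixed.
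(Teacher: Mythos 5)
Your proof is correct and takes essentially the same approach as the paper: reduce to counting Hermite normal form representatives via Lemma \ref{lemma_hnf}, factor the count over the rows once the diagonal $(d_1,\ldots,d_n)$ is fixed, and evaluate the per-row count by M\"obius inversion over divisors of $d_i$. The only cosmetic difference is that the paper first shifts the range $\{0,\ldots,d-1\}$ to $\{1,\ldots,d\}$ before doing inclusion--exclusion over prime divisors, while you apply M\"obius inversion directly on the original range; both yield the same count $(d/g)^{i-1}$ for tuples divisible by $g$.
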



\begin{proof}
We want to count those matrices in Hermite normal form which are in $M'_{n,k}$, that is, $n\times n$-matrices in Hermite normal form with determinant $k$ and all rows primitive. The number of such matrices is
\[\left|M'_{n,k}/\SL_n(\Z)\right|=\sum_{d_1\cdots d_n=k}\prod_{i=1}^n v_{i}(d_i),\]
where $v_{i}(d)$ is the number of primitive vectors $(x_1,\ldots,x_{i-1},d)$ such that $0\leq x_1,\ldots,x_{i-1}<d$.
There is a bijective correspondence between the primitive vectors $(x_1,\ldots,x_{i-1},d)$ and the vectors $y=(y_1,\ldots,y_{i-1})$ such that $1\leq y_1,\ldots,y_{i-1}\leq d$ and $\gcd(y)$ is coprime to $d$. Let $d=p_1^{a_1}\cdots p_j^{a_j}$ be the prime factorization of $d$. The number of vectors $y$ which are divisible by some set of primes $P\subseteq\{p_1,\ldots,p_j\}$ is
\[\left(\dfrac{d}{\prod_{p\in P} p}\right)^{i-1},\]
so by the principle of inclusion/exclusion (see \cite{Stanley}), we have
\begin{gather}
v_{i}(d)=\sum_{P\subseteq\{p_1,\ldots,p_j\}}(-1)^{|P|}\left(\dfrac{d}{\prod_{p\in P} p}\right)^{i-1}\\
=\sum_{g|p_1\cdots p_j}\mu(g)\left(\dfrac dg\right)^{i-1}=\sum_{g|d}\mu(g)\left(\dfrac dg\right)^{i-1}. 
\end{gather}
\end{proof}

We are now ready to derive the asymptotics of $N'_{n,k}(T)$.

\begin{proof}[Proof of Theorem \ref{main_theorem}]
  Let us write $A_1,\ldots,A_m$ for all the $n\times n$-matrices in Hermite normal form with determinant $k$, where $m\defeq |M'_{n,k}/\SL_n(\Z)|,$ and let $1\leq i\leq m$. Then
  \begin{gather}
    \abs{B_T\cap A_i\SL_n(\Z)}=\abs{A_i(A_i\inv B_T\cap \SL_n(\Z))}=\abs{A_i\inv B_T\cap \SL_n(\Z)},
  \end{gather}
  which by Corollary \ref{drs_cor} is equal to
  \begin{gather}
    \nu(A_i\inv B_T\cap \SL_n(\R))+\minfelterm
  \end{gather}
  for any $\e>0$. Since $A_i/k^{1/n}\in\SL_n(\R)$, we get by the invariance of the measure $\nu$ that
  \begin{gather}
    \nu(A_i\inv B_T\cap \SL_n(\R)) =
    \nu\left(\dfrac{A_i}{k^{1/n}} \left(A_i\inv B_T\cap \SL_n(\R)\right)\right) = \\
    \nu\left(k^{-1/n} B_T\cap \dfrac{A_i}{k^{1/n}}\SL_n(\R)\right) = 
    \nu\left(B_{T/k^{1/n}}\cap \SL_n(\R)\right).
  \end{gather}
  By Theorem \ref{drs_lemma}, the last expression is equal to
  \begin{gather}
    C_1(T/k^{1/n})^{n(n-1)}+\minfelterm,
  \end{gather}
  and thus
  \begin{gather}
    \abs{B_T\cap A_i\SL_n(\Z)} = \dfrac{C_1}{k^{{n-1}}}T^{n(n-1)}+\minfelterm. \label{hello}
  \end{gather}

  Now,
  \begin{gather}
    N_{n,k}'(T)=\abs{B_T\cap M_{n,k}'}=\abs{B_T\cap \bigcup_{i=1}^m A_i\SL_n(\Z)}
    =\sum_{i=1}^m\abs{B_T\cap A_i\SL_n(\Z)},
  \end{gather}
  so applying \eqref{hello} we get
  \begin{gather}
    N_{n,k}'(T)=\sum_{i=1}^m\dfrac{C_1}{k^{{n-1} }}T^{n(n-1)}+\minfelterm = \\
    \abs{M'_{n,k}/\SL_n(\Z)} \dfrac{C_1}{k^{{n-1} }}T^{n(n-1)}  +\minfelterm,
  \end{gather}
  and we need only apply Proposition \ref{formula_anprime_hehe} to get an explicit constant for the main term. This concludes the proof.
\end{proof}

\section{Density of matrices with primitive rows}
\label{chapter_density}

\newcommand{\all}{|M_{n,k}/\SL_n(\Z)|}
\newcommand{\prim}{|M'_{n,k}/\SL_n(\Z)|}

Set
\begin{align}
a_n(k)&\defeq \all=\sum_{d_1\cdots d_n=k}d_1^0\cdots d_n^{n-1}, \label{def_an} \\
a'_n(k)&\defeq \prim=\sum_{d_1\cdots d_n=k}\prod_{i=1}^n\sum_{g|d_i}\mu(g)\left(\dfrac{d_i}g\right)^{i-1}. \label{def_anprime}
\end{align}

We would like to calculate the density of matrices with primitive rows in $M_{n,k}$ for $k\neq 0$, that is, the quantity
\[D_n(k)= \lim_{T\to\infty}\dfrac{N'_{n,k}(T)}{N_{n,k}(T)}=\dfrac{c'_{n,k}}{c_{n,k}}=\dfrac{|M'_{n,k}/\SL_n(\Z)|}{|M_{n,k}/\SL_n(\Z)|}=\dfrac{a'_n(k)}{a_n(k)}.\]
We will prove in section \ref{proofconjsection} that $a_n, a'_n$ and $D_n$ are multiplicative functions, and therefore we need only understand their behavior for prime powers $k=p^m$. We will now prove a sequence of lemmas which we will finally use in section \ref{density_asymptotics} to prove Theorem \ref{thm_density}.

\begin{Lemma} \label{incl_excl_an}
  The functions $a'_n$ and $a_n$ are connected via the identity
    \[a'_n(p^m)=\sum_{i=0}^m(-1)^i\binom ni a_n(p^{m-i})\]
  for primes $p$ and $m\geq 0$.
\end{Lemma}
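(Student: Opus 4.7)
The plan is to interpret the identity as an inclusion--exclusion on which rows of a Hermite normal form matrix fail to be primitive. First I would observe that when $k=p^m$, a row of a matrix $A\in M_{n,p^m}$ fails to be primitive if and only if $p$ divides every entry of that row: any common divisor $d>1$ of the entries of a row divides $\det A=p^m$, so $d$ is a power of $p$ and in particular divisible by $p$.

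Let $H_j$ denote the set of $n\times n$ lower Hermite normal form matrices of determinant $p^j$, with the convention $H_j=\emptyset$ for $j<0$, so that $|H_j|=a_n(p^j)$. For a subset $S\subseteq\{1,\ldots,n\}$, let $H_m^S\subseteq H_m$ be the subset of matrices in which $p$ divides every entry of every row indexed by $S$. By the observation above and inclusion--exclusion,
\[
   a'_n(p^m) \;=\; \sum_{S\subseteq\{1,\ldots,n\}}(-1)^{|S|}\,|H_m^S|.
\]

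Next I would construct a bijection $H_m^S\to H_{m-|S|}$ by dividing each row indexed by $S$ by $p$. The Hermite inequalities on row $i$, namely $0\le c_{ij}<c_{ii}$ for $j<i$, are homogeneous in row $i$ and hence preserved under scaling that row by $1/p$; the inequalities on the remaining rows do not involve row $i$ and are automatically preserved; and the determinant scales by $p^{-|S|}$. The inverse map multiplies each row in $S$ by $p$. Therefore $|H_m^S|=a_n(p^{m-|S|})$, which depends only on $|S|$, and grouping terms by $i=|S|$ gives
\[
   a'_n(p^m) \;=\; \sum_{i=0}^{n}(-1)^i\binom{n}{i}a_n(p^{m-i})
   \;=\; \sum_{i=0}^{m}(-1)^i\binom{n}{i}a_n(p^{m-i}),
\]
where the last equality uses $a_n(p^{m-i})=0$ for $i>m$ (a matrix of determinant $p^m$ cannot have more than $m$ rows divisible by $p$).

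No step presents a serious obstacle; the only thing to verify carefully is that the row-scaling bijection preserves Hermite normal form, which is immediate from the homogeneity of the Hermite inequalities in each single row. An alternative route would be to manipulate the explicit formulas \eqref{def_an} and \eqref{def_anprime} directly on prime powers, but the combinatorial argument above is cleaner and shows why the binomial coefficient $\binom{n}{i}$ arises.
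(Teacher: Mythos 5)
Your proof is correct and takes essentially the same approach as the paper: inclusion--exclusion over the subsets $S\subseteq\{1,\ldots,n\}$ of rows divisible by $p$, combined with the observation that the count of Hermite normal form matrices with the rows in $S$ divisible by $p$ equals $a_n(p^{m-|S|})$. The paper states the bijection implicitly; you spell it out (dividing the $S$-rows by $p$ preserves the Hermite inequalities since they are homogeneous row by row), which is a worthwhile added detail but not a different argument.
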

\begin{proof}
  $a_n(p^m)$ counts the number of $n\times n$-matrices in Hermite normal form with determinant $p^m$, whereas $a'_n(p^m)$ counts the number of such with primitive rows.
  If $A$ is a matrix in $M_{n,k}\setminus M'_{n,k}$, then some set of rows, indexed by $S\subseteq[n]\defeq\{1,\ldots,n\}$ (where $|S|\leq m$), are divisible by $p$. The number of such matrices is $a_n(p^{m-|S|})$, and thus by the inclusion/exclusion principle,
  \[a'_n(p^m)=\sum_{\substack{S\subseteq[n]\\|S|\leq m}}(-1)^{|S|}a_n(p^{m-|S|})=\sum_{i=0}^m(-1)^i\binom ni a_n(p^{m-i}). \qedhere \]
\end{proof}

\begin{Lemma} \label{split_an}
  For any prime $p$ and $m\geq 1$, the following recursion holds:
  \[a_n(p^m)=p^{n-1}a_n(p^{m-1})+a_{n-1}(p^m),\]
  or equivalently,
  \[a_n(p^{m-1})=\dfrac{a_n(p^m)-a_{n-1}(p^m)}{p^{n-1}}.\]
\end{Lemma}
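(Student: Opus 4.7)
The plan is to derive both forms of the recursion from the explicit formula \eqref{formula_an}, by specializing to $k=p^m$ and then splitting the resulting sum according to the value of the last factor $d_n$.

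First I would rewrite \eqref{formula_an} for $k=p^m$. Every divisor of $p^m$ is itself a power of $p$, so parameterizing $d_i=p^{a_i}$ with $a_i\geq 0$ and $a_1+\cdots+a_n=m$ turns \eqref{formula_an} into
\[a_n(p^m)=\sum_{a_1+\cdots+a_n=m}\;p^{\sum_{i=1}^{n}(i-1)a_i}.\]
(Equivalently, one could think combinatorially: $a_n(p^m)$ counts Hermite normal form matrices of determinant $p^m$, whose diagonal is forced to be $(p^{a_1},\ldots,p^{a_n})$ and which carry $\prod_i (p^{a_i})^{i-1}$ choices of sub-diagonal entries.)

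Next I would split this sum according to whether $a_n=0$ or $a_n\geq 1$. The $a_n=0$ contribution has $a_1+\cdots+a_{n-1}=m$, and since the last exponent $(n-1)a_n$ drops out, the summand becomes $\prod_{i=1}^{n-1}p^{(i-1)a_i}$; by definition \eqref{def_an} this subsum is precisely $a_{n-1}(p^m)$. The $a_n\geq 1$ contribution is handled by the substitution $a_n=b+1$ with $b\geq 0$: this extracts a global factor $p^{(n-1)\cdot 1}=p^{n-1}$ from the exponent and leaves a sum over tuples $(a_1,\ldots,a_{n-1},b)$ with $a_1+\cdots+a_{n-1}+b=m-1$ of $\prod p^{(i-1)a_i}\cdot p^{(n-1)b}$. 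By \eqref{def_an} applied to $k=p^{m-1}$, this residual sum equals $a_n(p^{m-1})$, so the contribution is $p^{n-1}a_n(p^{m-1})$. Adding the two cases yields the first form of the recursion, and the equivalent form follows by solving for $a_n(p^{m-1})$.

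I do not expect a real obstacle here: the argument is a direct combinatorial split. The only point requiring care is that the two sub-cases must be correctly identified with the right objects—case $a_n=0$ reducing the number of variables to $n-1$ while keeping the determinant exponent at $m$, and case $a_n\geq 1$ keeping $n$ variables while lowering the exponent to $m-1$—which is immediate from \eqref{def_an}.
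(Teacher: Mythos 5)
Your proof is correct and takes essentially the same approach as the paper: the paper splits the sum over $(d_1,\ldots,d_n)$ according to whether $d_n=1$ or $p\mid d_n$ (substituting $d_n=pe_n$ in the latter case), which matches exactly your split on $a_n=0$ versus $a_n\geq 1$ with the substitution $a_n=b+1$, just phrased multiplicatively rather than via exponents.
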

\begin{proof}
  We split the sum
  \[a_n(p^m)=\sum_{d_1\cdots d_n=p^m}d_1^0\cdots d_n^{n-1}\]
  into two parts, one part where $d_n$ is divisible by $p$, and another part where it is not (so that $d_n=1$). The terms corresponding to $d_n=1$ sum to $a_{n-1}(p^m)$. Where $d_n$ is divisible by $p$, we can write $d_n\eqdef pe_n$ for some $e_n$. Let $e_i\defeq d_i$ for all $i<n$. Thus,
    \begin{gather}
      \sum_{\substack{d_1\cdots d_n=p^m\\p|d_n}}d_1^0\cdots d_n^{n-1}=
      \sum_{\substack{e_1\cdots e_n=p^{m-1}}}e_1^0\cdots (pe_n)^{n-1}=p^{n-1}a_n(p^{m-1}).
    \end{gather}
  Adding the two parts gives us $a_n(p^m)=p^{n-1}a_n(p^{m-1})+a_{n-1}(p^m)$, from which the second claim in the lemma follows by rearrangement.
\end{proof}

\begin{Lemma} \label{limit_pm}
  Let $n$ and $p$ be fixed, where $n\geq 3$ and $p$ is a prime. Then
  \[D_n(p^m)\to \left(1-\dfrac1{p^{n-1}}\right)^n\]
  as $m\to\infty$.
\end{Lemma}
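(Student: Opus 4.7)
The plan is to expand $D_n(p^m) = a'_n(p^m)/a_n(p^m)$ using Lemma \ref{incl_excl_an}. Since $\binom{n}{i}$ vanishes for $i > n$, for $m \geq n$ the identity truncates to
\[
D_n(p^m) = \sum_{i=0}^{n} (-1)^i \binom{n}{i}\, \frac{a_n(p^{m-i})}{a_n(p^m)}.
\]
The statement therefore reduces to showing that $a_n(p^{m-i})/a_n(p^m) \to p^{-(n-1)i}$ as $m \to \infty$ for each fixed $0 \leq i \leq n$; substituting these limits into the displayed sum and invoking the binomial theorem yields exactly $(1 - 1/p^{n-1})^n$, which is the claim.

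By telescoping (writing $a_n(p^{m-i})/a_n(p^m)$ as a product of $i$ consecutive single-step ratios), it suffices to prove the case $i = 1$, i.e.\ that $a_n(p^{m-1})/a_n(p^m) \to p^{-(n-1)}$. By Lemma \ref{split_an}, this is equivalent to
\[
\frac{a_{n-1}(p^m)}{a_n(p^m)} \longrightarrow 0 \quad \text{as } m \to \infty.
\]
This is the one substantive step; everything else is bookkeeping. I would establish it by using the growth rate of $a_n(p^m)$, most cleanly via the generating function identity
\[
\sum_{m \geq 0} a_n(p^m)\, x^m \;=\; \prod_{j=0}^{n-1} \frac{1}{1 - p^j x},
\]
which is immediate from the definition of $a_n$ upon writing $d_i = p^{e_i}$ and summing a geometric series in each factor. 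Partial fractions then give $a_n(p^m) = A_{n-1}\, p^{(n-1)m} + O(p^{(n-2)m})$ with $A_{n-1} = \prod_{j=1}^{n-1}(1 - p^{-j})^{-1} > 0$, and similarly $a_{n-1}(p^m) = O(p^{(n-2)m})$, so the ratio decays like $p^{-m}$.

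The main obstacle is this asymptotic bound. For readers who prefer to avoid generating functions, one can replace that step by the elementary estimate: the tuple $(1,\ldots,1,p^m)$ alone contributes $p^{(n-1)m}$, so $a_n(p^m) \geq p^{(n-1)m}$, while every term in the sum defining $a_{n-1}(p^m)$ is at most $p^{(n-2)m}$ and the number of such terms is $\binom{m+n-2}{n-2} = O(m^{n-2})$, forcing $a_{n-1}(p^m)/a_n(p^m) = O(m^{n-2} p^{-m}) \to 0$. Either way the proof concludes with the binomial-theorem identification of the resulting sum as $(1 - p^{-(n-1)})^n$.
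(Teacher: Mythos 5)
Your proposal is correct and follows essentially the same route as the paper: expand via Lemma \ref{incl_excl_an}, reduce to $a_{n-1}(p^m)/a_n(p^m)\to 0$ via Lemma \ref{split_an}, bound $a_{n-1}(p^m)=O(m^{n-2}p^{(n-2)m})$ against $a_n(p^m)\geq p^{(n-1)m}$, and close with the binomial theorem. The generating-function version you mention first is a pleasant alternative, but your ``elementary estimate'' fallback is precisely the paper's argument.
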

\begin{proof}
  We apply the simple upper bound
  \begin{gather}
    a_{n-1}(p^m)=\sum_{d_1\cdots d_{n-1}=p^m}d_1^0\cdots d_n^{n-2}\leq \sum_{d_1\cdots d_{n-1}=p^m}(p^m)^{n-2}=(m+1)^{n-1}(p^m)^{n-2}
  \end{gather}
  to the expression for $a_n(p^{m-1})$ in Lemma \ref{split_an}:
  \begin{align}
    a_n(p^{m-1})&=\dfrac1{p^{n-1}}(a_n(p^m)-a_{n-1}(p^m))\\
    &=\dfrac1{p^{n-1}}a_n(p^m)+\BigO((p^m)^{n-2}(m+1)^{n-1}).
  \end{align}
  Repeated application (at most $n$ times) of this formula yields the asymptotics
  \[a_n(p^{m-i})=\dfrac1{(p^{n-1})^i}a_n(p^m)+\BigO((p^m)^{n-2}(m+1)^{n-1})\]
  for $1\leq i\leq n$.

  Now let $m\to\infty$, so that we may assume $m$ to be larger than $n$. The sum in Lemma \ref{incl_excl_an} then extends up to $i=n$ (because the factors $\binom ni$ vanish for larger $i$), so
  \begin{align}
    a'_n(p^m)&=\sum_{i=0}^n(-1)^i\binom ni a_n(p^{m-i})\\
      &=\sum_{i=0}^n(-1)^i\binom ni \dfrac1{(p^{n-1})^i}a_n(p^m)+\BigO((p^m)^{n-2}(m+1)^{n-1}).
  \end{align}
  We divide by $a_n(p^m)$ on both sides and use the fact that $a_n(p^m)\geq (p^m)^{n-1}$, so that
  \begin{align}
    D_n(p^m)&=\sum_{i=0}^n(-1)^i\binom ni \dfrac1{(p^{n-1})^i}+\BigO\left(\dfrac{(p^m)^{n-2}(m+1)^{n-1}}{(p^m)^{n-1}}\right)\\
    &=\sum_{i=0}^n\binom ni \left(\dfrac{-1}{p^{n-1}}\right)^i+\BigO\left(\dfrac{(m+1)^{n-1}}{p^m}\right)\\
    &=\left(1-\dfrac1{p^{n-1}}\right)^n+\BigO\left(\dfrac{(m+1)^{n-1}}{p^m}\right).
  \end{align}
  As $m\to\infty$, the second term on the right vanishes.
\end{proof}

\subsection{Multiplicativity and monotonicity of the density function} \label{proofconjsection}

In this section we will prove the following proposition.
\begin{Proposition} \label{decreasing}
  The function $D_n$ is multiplicative, and $D_n(p^m)$ is strictly decreasing as a function of $m$ for any fixed prime $p$ and dimension $n\geq 2$.
\end{Proposition}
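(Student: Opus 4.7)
The plan is to prove the multiplicativity and the strict monotonicity separately.

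For multiplicativity, I would observe that both $a_n$ and $a'_n$ are Dirichlet convolutions of multiplicative functions, and hence so is $D_n = a'_n/a_n$. Specifically, formula~(\ref{def_an}) expresses $a_n$ as $\mathrm{id}^0 \ast \mathrm{id}^1 \ast \cdots \ast \mathrm{id}^{n-1}$ where $\mathrm{id}^j(d) := d^j$ is completely multiplicative, and Proposition~\ref{formula_anprime_hehe} expresses $a'_n$ as $v_1 \ast v_2 \ast \cdots \ast v_n$ where $v_i = \mu \ast \mathrm{id}^{i-1}$ is itself multiplicative as a convolution of multiplicatives.

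For strict monotonicity of $D_n(p^m)$ in $m$, the base case $m = 0$ is easy: the HNF matrix $\mathrm{diag}(p, 1, \ldots, 1) \in M_{n,p}/\SL_n(\Z)$ has non-primitive first row, so $D_n(p) < 1 = D_n(1)$. For $m \geq 1$ my plan is to use local generating functions. Multiplying Lemma~\ref{incl_excl_an} by $x^m$ and summing yields $\sum_m a'_n(p^m) x^m = (1-x)^n \sum_m a_n(p^m) x^m$, and the Euler product of the associated Dirichlet series gives $\sum_m a_n(p^m) x^m = \prod_{i=0}^{n-1}(1 - p^i x)^{-1}$. Partial fractions then produce
\[ a_n(p^m) = \sum_{i=0}^{n-1} c_i\, p^{im}, \qquad a'_n(p^m) = \sum_{i=1}^{n-1} c_i(1-p^{-i})^n\, p^{im} \quad (m \geq 1), \]
where $c_i = \prod_{j \neq i}(1-p^{j-i})^{-1}$ and the identity $c'_i = c_i(1-p^{-i})^n$ is a short residue calculation. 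Substituting these expansions and pairing the indices $(i,j)$ with $(j,i)$, the numerator of $D_n(p^{m+1}) - D_n(p^m)$ collapses to
\[ \sum_{0 \leq i<j\leq n-1} c_i c_j (p^j - p^i)\bigl[(1-p^{-i})^n - (1-p^{-j})^n\bigr] p^{(i+j)m}. \]

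For $i<j$ the factor $(p^j - p^i)$ is positive and the bracket is negative, so each summand's sign is that of $c_i c_j$, which equals $(-1)^{i+j}$ and hence alternates. The main obstacle is to show that this alternating-sign sum is strictly negative, uniformly in $n \geq 2$ and in primes $p$. A cleaner reformulation uses the Gaussian binomial identity $a_n(p^m) = \binom{m+n-1}{n-1}_p$, which together with Lemma~\ref{incl_excl_an} gives the closed form $D_n(p^m) = \sum_{k=0}^n(-1)^k\binom{n}{k} r_k(m)$ with $r_k(m) := \prod_{j=1}^k(p^{m-j+1}-1)/(p^{m+n-j}-1)$ for $m \geq n$. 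A direct computation shows
\[ r_k(m+1) - r_k(m) = r_k(m) \cdot \frac{(p^k-1)(p^{n-1}-1)p^{m-k+1}}{(p^{m-k+1}-1)(p^{m+n}-1)} > 0 \quad (k \geq 1), \]
turning the question into an alternating-sum estimate that I would close either by pairing consecutive indices or by induction on $n$ via Lemma~\ref{split_an} together with the analogous recursion $a'_n(p^{m+1}) - p^{n-1}a'_n(p^m) = a'_{n-1}(p^{m+1}) - a'_{n-1}(p^m)$ obtained from the same generating-function manipulation. The regime $1 \leq m < n$ needs a short separate check because the inclusion--exclusion truncates there.
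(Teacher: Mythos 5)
Your multiplicativity argument is correct and essentially identical to the paper's: both $a_n$ and $a_n'$ are Dirichlet convolutions of multiplicative factors, and the ratio inherits multiplicativity.

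The monotonicity part has a genuine gap. Your generating-function and partial-fraction manipulations are in the right spirit (modulo a sign slip: after pairing $(i,j)$ with $(j,i)$ the bracket should be $(1-p^{-j})^n-(1-p^{-i})^n$, not the reverse), but the whole weight of the statement falls on showing the resulting alternating-sign sum
\[
\sum_{0\le i<j\le n-1} c_ic_j\,(p^j-p^i)\bigl[(1-p^{-j})^n-(1-p^{-i})^n\bigr]\,p^{(i+j)m}
\]
is strictly negative. You flag this yourself as ``the main obstacle'' and then offer two alternative reformulations (Gaussian binomials, the recursion $a_n'(p^{m+1})-p^{n-1}a_n'(p^m)=a_{n-1}'(p^{m+1})-a_{n-1}'(p^m)$), neither of which you carry to completion: ``pairing consecutive indices'' and ``induction on $n$'' are proposals, not proofs. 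In particular, the induction route hits the same wall: writing out the determinant-like expression using your recursion reduces matters to comparing $[a_{n-1}'(p^{m+1})-a_{n-1}'(p^m)]\,a_n(p^m)$ against $a_n'(p^m)\,a_{n-1}(p^{m+1})$, and the induction hypothesis about $D_{n-1}$ does not obviously control this. You also acknowledge but do not handle the regime $1\le m<n$ where the inclusion--exclusion truncates.

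For comparison, the paper avoids the alternating sum entirely. It reduces the monotonicity of $D_n(p^m)$ to \emph{logarithmic concavity} of $m\mapsto a_n'(p^m)$ (equation \eqref{anprimelogconcave}), then invokes the classical result (Theorem \ref{menonlemma}, after Menon) that a convolution of positive log-concave sequences is log-concave, together with Lemma \ref{convolemma} showing that $(M\star P_i)\star(M\star P_j)$ is log-concave whenever $0<i<j$. This lets the paper decompose $m\mapsto a_n'(p^m)$ as a convolution of log-concave blocks once $n>5$, and explicit polynomial computations handle the base cases $n=4,5$ (Proposition \ref{fourandfive}). Crucially, log-concavity of $a_n'(p^m)$ actually \emph{fails} for $n=2,3$ (at $r=1$, $p=2$), which is why the paper proves monotonicity directly for $n=2,3$ in Proposition \ref{twoandthree}. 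Your sketch gives no indication that the low-$n$ cases are special, which is a further sign the argument as outlined would not go through uniformly in $n\ge 2$.
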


We may rewrite \eqref{def_an} as
\begin{gather}
  a_n=(\cdot)^{n-1}*\cdots*(\cdot)^0
\end{gather}
where $(\cdot)^i$ is the function $x\mapsto x^i$ and $*$ denotes the Dirichlet convolution. Similarly, we may rewrite \eqref{def_anprime} as
\begin{gather}
  a_n' = (\mu*(\cdot)^{n-1}) * \cdots * (\mu*(\cdot)^0), \label{anprimemuu}
\end{gather}
so by the commutativity and associativity of the Dirichlet convolution we have
\begin{gather}
  a_n' = \mu^{*n}*a_n,
\end{gather}
where $\mu^{*n}$ denotes the convolution of $\mu$ with itself $n$ times (so that $\mu^{*1}=\mu$). Since the Dirichlet inverse of $\mu$ is the constant function $1$, we have also the relation
\begin{gather}
  a_n = 1^{*n}*a_n'.
\end{gather}
As $\mu$ and $(\cdot)^i$ are multiplicative functions, it follows that $a_n, a_n'$ and $D_n$ are multiplicative as well.

Now, we want to show that $D_n(p^m) = a_n'(p^m)/a_n(p^m)$ is strictly decreasing as a function of $m$, for fixed $n\geq 2$ and primes $p$, or equivalently that
\begin{gather}
  \dfrac{a_n'(p^m)}{a_n(p^m)} > \dfrac{a_n'(p^{m+1})}{a_n(p^{m+1})} \label{lul}
\end{gather}
for all $m\geq 0$.
The inequality \eqref{lul} is equivalent to
\begin{gather}
  \dfrac{a_n'(p^m)}{(1^{*n}*a_n')(p^m)} > \dfrac{a_n'(p^{m+1})}{(1^{*n}*a_n')(p^{m+1})}
\end{gather}
for all $m\geq 0$, which is equivalent to
\begin{gather}
  \dfrac{a_n'(p^m)}{\sum_{i=0}^m1^{*n}(p^i)a_n'(p^{m-i})} > \dfrac{a_n'(p^{m+1})}{\sum_{i=0}^{m+1}1^{*n}(p^i)a_n'(p^{m+1-i})},
\end{gather}
or, after taking the reciprocal of both sides,
\begin{gather}
  \sum_{i=0}^m1^{*n}(p^i)\dfrac{a_n'(p^{m-i})}{a_n'(p^m)} < \sum_{i=0}^{m+1}1^{*n}(p^i)\dfrac{a_n'(p^{m+1-i})}{a_n'(p^{m+1})}. \label{tricksy}
\end{gather}

Since the last term ($i=m+1$) on the right hand side is positive, this inequality holds if
\begin{gather}
  \dfrac{a_n'(p^{m-i})}{a_n'(p^m)} \leq \dfrac{a_n'(p^{m+1-i})}{a_n'(p^{m+1})}
\end{gather}
for all $i\leq m$. We can rearrange this inequality as
\begin{gather}
  \dfrac{a_n'(p^{m+1})}{a_n'(p^m)} \leq \dfrac{a_n'(p^{m+1-i})}{a_n'(p^{m-i})},
\end{gather}
which states that
$
  a_n'(p^{m+1})/a_n'(p^m)
$
is a non-increasing function of $m$, for fixed $n\geq 2$ and $p$ prime. We will therefore be done if we can prove that
\begin{gather}
    a_n'(p^{m+1}) a_n'(p^{m+1}) \geq a_n'(p^m)a_n'(p^{m+2}) \label{anprimelogconcave}
\end{gather}
for all $m\geq 0$, or equivalently, that the function $m\mapsto a_n'(p^m)$ is logarithmically concave:

We say that a sequence $u:\N_0\to\R$ is \dfn{logarithmically concave} if
\begin{gather}
    u_r^2-u_{r-1}u_{r+1} \geq 0 \label{logkdef}
\end{gather}
for all $r\geq 1$. We note that a sequence $u$ of positive real numbers is logarithmically concave if and only if $u_1/u_0 \geq u_2/u_1 \geq u_3/u_2 \geq \cdots$, that is, if and only if $(u_1/u_0, u_2/u_1, u_3/u_2, \ldots)$ is a non-increasing sequence. Also note that if $u$ is positive and logarithmically concave, then the inequality $u_{i+1}/u_i \geq u_{j+1}/u_j$ implies the inequality $u_{i+1}u_j-u_{j+1}u_i\geq 0$ for all indices $i<j$.

Let $\star$ denote the discrete convolution, so that $(u\star v)_r=\sum_{j=0}^r u_{r-j}v_j$ for all $r\geq 0$ given any sequences $u,v:\N_0\to\R$. We will need the following fact, which follows from the proof of Theorem 1 in \cite{menon}.

\begin{Theorem}[\!\!\cite{menon}, Theorem 1] \label{menonlemma} 
    Let $u,v:\N_0\to\R$ be sequences such that $u_0=v_0=1$, and let $w=u\star v$. Then we may write
    $w_r^2-w_{r-1}w_{r+1}=\op{I}+\op{II}+\op{III},$  where
    \begin{align}
        \op{I}&=\sum_{0\leq i< j\leq r-1} (v_j v_{i+1} - v_{j+1}v_i) ( u_{r-j}  u_{r-i-1} -  u_{r-1-j} u_{r-i}),\\
        \op{II}&=\sum_{j=0}^{r-1} v_j ( u_{r-j}  u_{r} -  u_{r-1-j} u_{r+1}),\\
        \op{III}&=v_ru_{r} + \sum_{j=0}^{r-1} u_{j} (v_r v_{r-j}-v_{r+1} v_{r-1-j}),
    \end{align}
    for all $r\geq 1$. In particular, if $u,v$ are positive and logarithmically concave sequences, then so is $w$, since all factors in the sums in $\op{I},\op{II},\op{III}$ are non-negative for such $u,v$.
\end{Theorem}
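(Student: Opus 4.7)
The identity is purely algebraic in the sequences $u$ and $v$, so the plan is to establish it by direct expansion and careful reindexing, and then handle the log-concavity conclusion as a sign analysis. Writing $w_r = \sum_{j=0}^r u_{r-j} v_j$, both $w_r^2$ and $w_{r-1} w_{r+1}$ are double sums of quartic monomials in the $u_i$ and $v_j$; the task is to regroup the terms of the difference into the three displayed pieces $\op{I}$, $\op{II}$, $\op{III}$.

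The central technical step is an index shift: splitting off the $i=0$ term from $w_{r+1} = \sum_{i=0}^{r+1} u_{r+1-i} v_i$ and letting $i \mapsto i+1$ in the remainder gives $w_{r+1} = u_{r+1} + \sum_{i=0}^{r} u_{r-i} v_{i+1}$ (using $u_0 = v_0 = 1$). Substituting into $w_{r-1} w_{r+1}$ and subtracting from $w_r^2 = \sum_{i,j=0}^r u_{r-i} u_{r-j} v_i v_j$ yields a double sum indexed by pairs $(i,j)$, which I would split according to whether both indices lie in the interior range $0 \le i < j \le r-1$ or touch the boundary. For interior pairs, grouping $(i,j)$ with its transpose $(j,i)$ together with the matching cross-terms from the shifted $w_{r-1}w_{r+1}$ produces the antisymmetric combination $(v_j v_{i+1} - v_{j+1} v_i)(u_{r-j} u_{r-i-1} - u_{r-1-j} u_{r-i})$ that assembles into $\op{I}$. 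The boundary contributions — those involving the shifted $v_{r+1}$, the split-off $u_{r+1}$, or the maximal index in $w_r^2$ — collect into $\op{II}$ and $\op{III}$, with the isolated term $v_r u_r$ in $\op{III}$ arising from the diagonal $u_0 v_r \cdot u_r v_0$ in $w_r^2$. I would verify the decomposition explicitly for small $r$ (say $r = 1, 2$) to confirm the pattern of boundary terms before pushing through the general case. The main obstacle is purely bookkeeping: ensuring every quartic monomial in $w_r^2 - w_{r-1}w_{r+1}$ is accounted for exactly once across the three sums.

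For the log-concavity conclusion, assume $u$ and $v$ are positive and logarithmically concave, so the ratios $u_{k+1}/u_k$ and $v_{k+1}/v_k$ are non-increasing in $k$. For $0 \le i < j$, the inequality $v_{i+1}/v_i \geq v_{j+1}/v_j$ rearranges to $v_j v_{i+1} - v_{j+1} v_i \geq 0$, and since $r-j \leq r-i-1$ in the same range, the analogous argument for $u$ gives $u_{r-j} u_{r-i-1} - u_{r-1-j} u_{r-i} \geq 0$, so every summand in $\op{I}$ is a product of two non-negative factors. A telescoping comparison of consecutive ratios shows likewise that $u_{r-j} u_r - u_{r-1-j} u_{r+1} \geq 0$ and $v_r v_{r-j} - v_{r+1} v_{r-1-j} \geq 0$ for $0 \le j \le r-1$, so $\op{II}$ and $\op{III}$ are also non-negative. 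Combining, $w_r^2 - w_{r-1}w_{r+1} \geq 0$ for every $r \geq 1$, which is exactly the assertion that $w = u \star v$ is logarithmically concave.
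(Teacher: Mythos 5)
The paper does not prove this result; it only cites it, stating that the identity ``follows from the proof of Theorem~1 in \cite{menon}.'' So there is no internal proof to compare against, and your attempt must be judged on its own.

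Your overall strategy --- expand $w_r^2 - w_{r-1}w_{r+1}$ as a sum of quartic monomials in the $u_i, v_j$, then regroup to match $\mathrm{I} + \mathrm{II} + \mathrm{III}$ --- is indeed the natural (and, as far as I can tell, essentially the original) approach, and the sign analysis in your final paragraph is entirely correct: for $0\le i<j$ the log-concavity of $v$ gives $v_jv_{i+1}-v_{j+1}v_i\ge 0$, the corresponding comparison of indices $r-1-j < r-1-i$ gives $u_{r-j}u_{r-i-1}-u_{r-1-j}u_{r-i}\ge 0$, and the summands of $\mathrm{II}$ and $\mathrm{III}$ follow by the same two-index argument with one index pushed out to $r$, $r+1$. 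So if the algebraic identity holds, the log-concavity conclusion does follow exactly as you say.

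Where the attempt is thinner than it looks is in the bookkeeping itself, which you explicitly defer. The ``interior versus boundary'' dichotomy you describe is an oversimplification: the interior term $\mathrm{I}$ involves index pairs such as $u_{r-j}u_{r-i-1}$ which already straddle the shift, and the isolated term $v_r u_r$ in $\mathrm{III}$ does \emph{not} simply come from a single diagonal monomial in $w_r^2$ --- in fact $w_r^2$ contributes $2u_rv_r$ (from the two corners $(i,j)=(0,r)$ and $(r,0)$), one of which must be absorbed into $\mathrm{I}$ via the cross term $v_{j+1}v_i\cdot u_{r-1-j}u_{r-i}$ at $(i,j)=(0,r-1)$. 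The small-case checks you promise (I verified $r=1$ and $r=2$ myself and they do match) confirm that the identity is right, but a complete argument would still need the general regrouping to be written out, and its shape is genuinely more intricate than your sketch suggests. In summary: correct plan, correct sign analysis, correct result, but the core algebraic identity is asserted by pattern-matching rather than proved.
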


Fix $n$ and $p$. Then since $(\mu*(\cdot)^{i})(p^m)=\sum_{r=0}^m\mu(p^{m-r})p^{ri}=(M\star P_i)(m)$ where $M$ is the sequence $(1,-1,0,0,0,\ldots)$ and where $P_i$ is the sequence $(1,p^i,p^{2i},p^{3i},\ldots)$, equation \eqref{anprimemuu} implies that the function $m\mapsto a_n'(p^m)$ can be written as
\begin{gather}
    (M\star P_{n-1})\star\cdots\star (M\star P_{0}).
\end{gather}

\begin{Lemma} \label{convolemma}
    Let $0\leq i<j$. Then
    $(M\star P_i)\star(M\star P_j)$ is positive and logarithmically concave if and only if $i>0$.
\end{Lemma}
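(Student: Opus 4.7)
The plan is to compute $w \defeq (M\star P_i)\star(M\star P_j)$ explicitly via generating functions and then verify positivity and the log-concavity inequality by direct algebraic manipulation. Writing $\alpha \defeq p^i$ and $\beta \defeq p^j$, and noting that $M$ has ordinary generating function $1-x$ while $P_i$ has generating function $1/(1-p^i x)$, the generating function of $w$ is
\[F(x) = \sum_{m\geq 0} w_m x^m = \frac{(1-x)^2}{(1-\alpha x)(1-\beta x)}.\]
Polynomial long division (the numerator and denominator both have degree two, so the rational function is improper) followed by partial fractions should yield $w_0 = 1$ and
\[w_m = \frac{(\alpha-1)^2\alpha^{m-1} - (\beta-1)^2\beta^{m-1}}{\alpha - \beta} \qquad (m\geq 1).\]

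Positivity is then immediate from this closed form. When $i = 0$, the convolution reduces to $M\star P_j$ (since $M\star P_0$ is the identity for $\star$), whose entries $1,\beta-1,\beta(\beta-1),\ldots$ are evidently positive. When $i\geq 1$, both numerator and denominator above are negative (because $\alpha<\beta$ forces $(\beta-1)^2\beta^{m-1}>(\alpha-1)^2\alpha^{m-1}$ for all $m\geq 1$), so $w_m>0$.

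For log-concavity, I would substitute the closed form into $w_r^2 - w_{r-1}w_{r+1}$. For $r\geq 2$, the cross-terms should cancel cleanly using the identity $\alpha^{r-2}\beta^r + \alpha^r\beta^{r-2} - 2(\alpha\beta)^{r-1} = (\alpha\beta)^{r-2}(\alpha-\beta)^2$, leaving
\[w_r^2 - w_{r-1}w_{r+1} = (\alpha-1)^2(\beta-1)^2(\alpha\beta)^{r-2},\]
which is $\geq 0$ with equality exactly when $\alpha = 1$, i.e., $i = 0$. For $r = 1$, direct extraction from $F(x)$ gives $w_1 = \alpha+\beta-2$ and $w_2 = \alpha^2+\alpha\beta+\beta^2-2(\alpha+\beta)+1$, whence
\[w_1^2 - w_0 w_2 = (\alpha-2)(\beta-2) - 1.\]
When $i = 0$ this equals $1-\beta < 0$, which completes the ``only if'' direction. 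When $i\geq 1$ one must verify $(\alpha-2)(\beta-2)\geq 1$, to be checked by case analysis on $p$ and $i$.

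The main obstacle I anticipate is precisely the $r = 1$ boundary: the clean algebraic identity for $r\geq 2$ is insensitive to small values, but $(p^i-2)(p^j-2) - 1$ is delicate for the smallest admissible $p^i$, so this boundary case will drive any side conditions on $p$ or $i$ that the lemma ultimately requires.
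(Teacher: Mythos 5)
Your generating-function approach is genuinely different from the paper's. The paper invokes a structural log-concavity result for convolutions (Theorem \ref{menonlemma}) together with the near-vanishing structure of the differences $u_s u_r - u_{s-1}u_{r+1}$ for $u=M\star P_i$, arriving at a clean identity $w_r^2-w_{r-1}w_{r+1}=(u_r-u_{r-1})(v_r-v_{r-1})$. You instead compute the closed form $w_m=\bigl((\alpha-1)^2\alpha^{m-1}-(\beta-1)^2\beta^{m-1}\bigr)/(\alpha-\beta)$ via partial fractions and substitute. All your computations (the closed form, the positivity argument, the identity $w_r^2-w_{r-1}w_{r+1}=(\alpha-1)^2(\beta-1)^2(\alpha\beta)^{r-2}$ for $r\geq 2$, and $w_1^2-w_0w_2=(\alpha-2)(\beta-2)-1$) are correct, and your instinct that the $r=1$ boundary drives the side conditions is exactly right. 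But you stopped short of the ``case analysis on $p$ and $i$'' — and that unfinished check is where the real content lies.

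If you carry out the check, you will find that $(\alpha-2)(\beta-2)\geq 1$ \emph{fails} for $\alpha=p^i=2$, i.e.\ for $p=2$, $i=1$: then $(\alpha-2)(\beta-2)=0$ and $w_1^2-w_0w_2=-1<0$. Concretely, with $p=2,\,i=1,\,j=2$ one gets $u=(1,1,2,4,\ldots)$, $v=(1,3,12,48,\ldots)$, $w=u\star v=(1,4,17,\ldots)$, and $w_1^2-w_0w_2=16-17=-1$. So Lemma \ref{convolemma} as stated is actually false. The paper's own proof has a hidden gap at this exact spot: the substitution $\op{I}=(-v_{r-1})(-u_{r-1})$ is only valid for $r\geq 2$. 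For $r=1$ the sum defining $\op{I}$ in Theorem \ref{menonlemma} runs over $0\leq i<j\leq 0$ and is therefore empty, so $\op{I}=0$ rather than $u_0v_0=1$, and the correct identity at $r=1$ is
$w_1^2-w_0w_2=(u_1-u_0)(v_1-v_0)-u_0v_0=(\alpha-2)(\beta-2)-1,$
which matches your computation but not the paper's $(u_r-u_{r-1})(v_r-v_{r-1})$. Fortunately, the lemma is only ever invoked in the proof of Proposition \ref{decreasing} with $i\geq 4$, where $\alpha\geq 16$ and $(\alpha-2)(\beta-2)-1$ is comfortably positive, so the downstream results stand; but the lemma should be re-stated, e.g.\ for $i\geq 2$, or with the hypothesis $(p^i-2)(p^j-2)\geq 1$. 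In short: your route is correct as far as it goes, is arguably cleaner, and — had you finished the case analysis you flagged — would have exposed a genuine error in the paper.
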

\begin{proof}
    Write $u\defeq M\star P_i$ and $v\defeq M\star P_j$ where $i<j$. We have $u_0=1$ and $u_r=p^{ir}-p^{i(r-1)}$ for all $r\geq 1$. Thus $u_1u_r-u_0u_{r+1}=(p^i-1)(p^{ir}-p^{i(r-1)})-(p^{i(r+1)}-p^{ir})=p^{i(r-1)}-p^{ir} = -u_r$ for all $r\geq 1$, and $u_su_r-u_{s-1}u_{r+1}=0$ when $s\geq 2, r\geq 1$ or $s=1,r=0$. Likewise $v_sv_r-v_{s-1}v_{r+1}$ is $-v_r$ if $s=1, r\geq 1$, and $0$ otherwise.

    Let $w\defeq u\star v=(M\star P_i)\star(M\star P_j)$.
    By Theorem \ref{menonlemma} we can write $w_r^2-w_{r-1}w_{r+1}=\op{I}+\op{II}+\op{III}$, where
    \begin{align}
        \op{I}&=(-v_{r-1})(-u_{r-1}),\\
        \op{II}&= v_{r-1} (-u_r),\\
        \op{III}&=v_ru_{r} +  u_{r-1} (-v_r),
    \end{align}
    for all $r\geq 1$, and therefore
    \begin{gather}
        w_r^2-w_{r-1}w_{r+1} =
         u_{r-1} v_{r-1}  +u_{r}v_r - u_r v_{r-1}   -  u_{r-1} v_r =\\ (u_r-u_{r-1})(v_r-v_{r-1}).
    \end{gather}
    Thus, since $(u_0,u_1,\ldots)$ is a non-decreasing sequence for $i>0$, and likewise $(v_0,v_1,\ldots)$ is a non-decreasing sequence for $j>0$, we get $w_r^2-w_{r-1}w_{r+1}\geq 0$ for all $r\geq 1$ for $i>0$. Also, the sequence $w$ is positive for $i,j>0$ since it is then the convolution of two positive sequences.
     If $i=0$, then the inequality $w_r^2-w_{r-1}w_{r+1}\geq 0$ fails for $r=1$ since then $u_1-u_0=(p^0-1)-1<0$ and $v_1-v_0=(p^j-1)-1>0$.
\end{proof}

We will prove Proposition \ref{decreasing} by induction on $n$. The base case is the following proposition, which we will prove in Appendix \ref{appxfourandfive}.
\begin{Proposition} \label{fourandfive}
    For $n=4,5$ and any fixed prime $p$, the function $m\mapsto a_n'(p^m)$ is logarithmically concave.
\end{Proposition}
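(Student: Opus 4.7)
The plan is to derive a closed-form expression for $a_n'(p^m)$ via partial fractions of its generating function, and then verify log-concavity by direct polynomial analysis. Since each factor $M\star P_i$ has ordinary generating function $(1-x)/(1-p^ix)$, the generating function of $m\mapsto a_n'(p^m)$ is
\[
F_n(x) \defeq \sum_{m\geq 0}a_n'(p^m)\,x^m = \prod_{i=0}^{n-1}\frac{1-x}{1-p^ix} = \frac{(1-x)^{n-1}}{\prod_{i=1}^{n-1}(1-p^ix)}.
\]
Since the numerator and denominator both have degree $n-1$, partial fractions yield
\[
F_n(x) = \frac{1}{p^{n(n-1)/2}} + \sum_{i=1}^{n-1}\frac{A_i}{1-p^ix},
\]
where $A_i = (1-p^{-i})^{n-1}/\prod_{j\neq i}(1-p^{j-i})$. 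Consequently $a_n'(p^m) = \sum_{i=1}^{n-1}A_ip^{im}$ for every $m\geq 1$, and the signs of the $A_i$ alternate: $\sgn A_i = (-1)^{n-1-i}$ (from the $n-1-i$ negative factors in the denominator).

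Setting $b_m \defeq a_n'(p^m)$, we need $b_m^2 - b_{m-1}b_{m+1}\geq 0$ for $m\geq 1$. For $m\geq 2$ the formula above applies to all three terms, and a direct expansion gives
\[
b_m^2 - b_{m-1}b_{m+1} = \sum_{1\leq i<j\leq n-1}(-A_iA_j)\frac{(p^{j-i}-1)^2}{p^{j-i}}p^{(i+j)m}.
\]
Viewed as a polynomial in $t=p^m$, the coefficients have sign $(-1)^{i+j+1}$ and so alternate along the powers of $t$. For $n=4$, this is a quadratic $R_4(t)=\alpha t^2-\beta t+\gamma$ with $\alpha,\beta,\gamma>0$; explicit computation of the $A_iA_j$ shows that the sum of its roots equals $p/(p+1)^2<1$ and the product equals $p^3/(p^2+p+1)^3<1$. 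Since both symmetric functions are positive and less than $1$, both roots (if real) must lie in $(0,1)$, and hence $R_4(t)>0$ for all $t\geq 1$, in particular for $t=p^m$. For $n=5$ the analogous object is a quartic $R_5(t)$ with alternating-sign coefficients, and the plan is to apply the same strategy of extracting the elementary symmetric functions of the roots via Vieta and bounding them so that the roots again lie in $(0,1)$.

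The case $m=1$ must be handled separately, since the formula $b_m=\sum A_ip^{im}$ fails at $m=0$: we have $b_0=1\neq\sum A_i=1-p^{-n(n-1)/2}$. Here I compute $b_1$ and $b_2$ as explicit polynomials in $p$ using Lemma \ref{incl_excl_an} together with the direct enumeration of $a_n(p^m)$ for small $m$. For $n=4$ this yields the pleasant factorization
\[
b_1^2-b_0b_2 = (p-1)(p+2)(p^3-3),
\]
which is positive for every prime $p\geq 2$. For $n=5$ the analogous computation gives $b_1^2-b_0b_2=(p-1)(p^6+2p^5+4p^4+p^3-2p^2-6p-10)$, whose second factor is positive for $p\geq 2$ because each of $p^6-10$, $2p^5-6p$, $4p^4-2p^2$, and $p^3$ is non-negative there.

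The chief obstacle is the root analysis of the quartic $R_5(t)$ for general $m\geq 2$. While the quadratic case for $n=4$ is transparent because a positive quadratic with both symmetric functions less than $1$ must have its roots in $(0,1)$, locating all four roots of a quartic is more delicate. The likely route is either an explicit factorization of $R_5(t)$ into two quadratics each amenable to the Vieta analysis above, or a direct verification that the negative-coefficient monomials are dominated by the positive ones on $[1,\infty)$, possibly aided by a symbolic computation using the explicit expressions for the $A_i$.
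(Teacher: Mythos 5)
Your generating-function route is genuinely different from the paper's, and for $n=4$ it is complete and correct: I verified that the elementary symmetric functions of the two roots of the reduced quadratic are $p/(p+1)^2$ and $p^3/(p^2+p+1)^3$, both of which lie in $(0,1)$, and your Vieta argument (positive sum and product, sum $<1$, hence both roots in $(0,1)$ if real; positivity automatic if complex) is sound for degree two. Your $m=1$ computations also match: $b_1^2-b_0b_2=(p-1)(p+2)(p^3-3)$ for $n=4$ and $(p-1)(p^6+2p^5+4p^4+p^3-2p^2-6p-10)$ for $n=5$ agree with the paper's expressions after expanding. By contrast, the paper works directly: it expands $a_n'(p^m)$ as an explicit polynomial in $p$ and $p^m$ (via Proposition~\ref{formula_anprime_hehe}, summing geometric series), forms $a_n'(p^{m+1})^2-a_n'(p^m)a_n'(p^{m+2})$, and proves positivity by an explicit term-by-term rearrangement into non-negative pairs. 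Your partial-fraction decomposition is conceptually cleaner and explains \emph{why} the difference factors as $\sum_{i<j}(-A_iA_j)\frac{(p^{j-i}-1)^2}{p^{j-i}}p^{(i+j)m}$, which the paper's brute-force expansion obscures.

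However, there is a genuine gap, which you acknowledge: the $n=5$, $m\geq 2$ case is only a plan. The Vieta argument does not transfer automatically to the quartic. For a quadratic with alternating signs, ``sum of roots $<1$'' plus ``both roots positive'' immediately gives both roots in $(0,1)$, but for the quartic you would need to actually compute the sum of roots and check it is $<1$ (it may well be, but this is not obvious from the structure alone), and you would further need to rule out the possibility that a pair of complex-conjugate roots has negative real part masking a real root $\geq 1$ in the sum. (Descartes' rule does tell you there are no negative real roots, but says nothing about the real parts of complex roots.) Also note a small imprecision: for $n=5$ the exponents $i+j$ for $1\leq i<j\leq 4$ are $\{3,4,5,5,6,7\}$, so the value $5$ occurs twice (from $(1,4)$ and $(2,3)$); both contribute with the same positive sign, so they combine harmlessly, but it should be stated. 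Your suggested fallback --- showing the negative-coefficient monomials are dominated by positive ones on $[1,\infty)$ --- is in effect what the paper does, except the paper does the domination for the un-reduced polynomial difference and never passes through the quartic in $t=p^m$. To close the gap you would need to either carry out the Vieta analysis on the actual quartic coefficients (computing the $A_iA_j$ explicitly, as the paper does implicitly) or fall back to the paper's domination argument.
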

It happens that $a_n'(p^m)$, as a function of $m$, is not logarithmically concave for $n=2$ or $n=3$ for all $p$ (it fails the inequality \eqref{anprimelogconcave} for $r=1$ when $p=2$), so we will also need the following proposition, which we prove in Appendix \ref{appxtwoandthree}.

\begin{Proposition} \label{twoandthree}
    For $n=2,3$ and any fixed prime $p$, the function $m\mapsto D_n(p^m)$ is strictly decreasing.
\end{Proposition}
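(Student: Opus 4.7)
The plan is to verify monotonicity of $D_n(p^m)$ for $n=2,3$ by direct computation from the generating functions. Since on prime powers the Dirichlet convolution reduces to ordinary convolution and $\sum_m \mu(p^m)x^m = 1-x$, the identities $a_n=(\cdot)^0*\cdots*(\cdot)^{n-1}$ and $a_n'=\mu^{*n}*a_n$ yield
\[
\sum_{m\geq 0} a_n(p^m)\,x^m = \prod_{i=0}^{n-1}\frac{1}{1-p^i x}, \qquad \sum_{m\geq 0} a_n'(p^m)\,x^m = \frac{(1-x)^{n-1}}{\prod_{i=1}^{n-1}(1-p^i x)}.
\]

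For $n=2$, partial fractions give $a_2(p^m)=(p^{m+1}-1)/(p-1)$ and $a_2'(p^m)=(p-1)p^{m-1}$ for $m\geq 1$ (with $a_2'(1)=1$); the inequality $D_2(p^m)>D_2(p^{m+1})$ then reduces, for $m\geq 1$, to the trivial estimate $p^{m+2}-1>p(p^{m+1}-1)$, and the boundary comparisons $D_2(1)>D_2(p)$ and $D_2(p)>D_2(p^2)$ are handled by direct substitution.

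For $n=3$, partial fractions yield, for $m\geq 1$, the closed forms $a_3'(p^m)=q(p-1)[(p+1)^2 q-p]/p^3$ and $a_3(p^m)=[p^3 q^2-(p^2+p)q+1]/[(p-1)^2(p+1)]$, where $q\defeq p^m$. Substituting into $a_3'(p^m)\,a_3(p^{m+1})-a_3'(p^{m+1})\,a_3(p^m)$, the would-be leading $q^4$ terms cancel and the expression simplifies to a positive rational multiple of $q(p-1)Q(p,q)$, where $Q(p,q)\defeq p^2(3p^2+3p+1)q^2-(p+1)^3 q+p$. Viewed as a quadratic in $q$, the minimum of $Q(p,\cdot)$ is attained at $q^*=(p+1)^3/[2p^2(3p^2+3p+1)]$, which is strictly less than $1$ for every prime $p\geq 2$; hence $Q$ is increasing on $q\geq 1$, and it suffices to check $Q(p,1)=3p^4+2p^3-2p^2-2p-1$, which is manifestly positive for $p\geq 2$. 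The case $m=0$ is handled directly by $D_3(1)=1>(p-1)(p+2)/(p^2+p+1)=D_3(p)$.

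The main obstacle is the one already flagged in the paper: since the log-concavity of $m\mapsto a_n'(p^m)$ fails for $n=2,3$ at $r=1$ when $p=2$, one cannot invoke Lemma \ref{convolemma} or Theorem \ref{menonlemma} as in the $n\geq 4$ case, and the comparison must be carried out from explicit formulas. The algebraic windfall that makes this tractable is the cancellation of the leading $q^4$ coefficients in the expansion, after which positivity reduces to an elementary low-degree polynomial inequality in $q$.
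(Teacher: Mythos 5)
Your proposal is correct and takes essentially the same approach as the paper: in both cases the closed forms for $a_n(p^m)$ and $a_n'(p^m)$ at $n=2,3$ are computed explicitly (you via generating functions and partial fractions, the paper via direct summation of geometric series, giving identical formulas), and the monotonicity of $D_n(p^m)$ is then reduced to an elementary polynomial inequality. The only difference is in the final step for $n=3$: after cross-multiplying, the paper cancels the common factor $(p^{m+2}-1)$ first, collapsing the inequality to $(p+1)^3 > p^3 + p^{1-m}$, whereas you clear all denominators and verify positivity of the quadratic $Q(p,q)=p^2(3p^2+3p+1)q^2-(p+1)^3q+p$ via a vertex argument; in fact $Q(p,q)$ factors as $\left[(3p^2+3p+1)q-p\right]\left(p^2q-1\right)$, which recovers the paper's simpler condition directly.
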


The proofs of Propositions \ref{twoandthree} and \ref{fourandfive} consist of explicitly evaluating $a_n(p^m)$ and $a_n'(p^m)$ for the values of $n$ in question, both of which are polynomials in $p$ with exponents in $m$, and verifying equations \eqref{lul} and \eqref{anprimelogconcave}, respectively.

\begin{proof}[Proof of Proposition \ref{decreasing}]
    By Proposition \ref{fourandfive} and Proposition \ref{twoandthree}, it suffices to consider $n>5$.
    By Proposition \ref{fourandfive} and Theorem \ref{menonlemma}, it follows that $A_n'(m)\defeq a_n'(p^m)$ is logarithmically concave for all $n>5$ and any $p$, since for any even $n>5$, we can write
\begin{gather}
    A_n'=A_4'\star[(M\star P_4)\star(M\star P_5)]\star\cdots\star[(M\star P_{n-2})\star(M\star P_{n-1})],
\end{gather}
and for any odd $n>5$, we can write
\begin{gather}
    A_n'=A_5'\star[(M\star P_5)\star(M\star P_6)]\star\cdots\star[(M\star P_{n-2})\star(M\star P_{n-1})],
\end{gather}
and in both cases we have written $A_n'$ as the convolution of positive and logarithmically concave sequences, by Lemma \ref{convolemma}. We have thus proven the inequality \eqref{anprimelogconcave}, and this concludes the proof of Proposition \ref{decreasing}.
\end{proof}

\subsection{Asymptotics of the density function}\label{density_asymptotics}
In this section we prove Theorem \ref{thm_density} and thus derive the asymptotics of $D_n(k)$. Fix $n\geq 3$.  For any nonzero integer $k_i$, write $k_i=\prod_p p^{m_p(i)}$ as a product of prime powers, where all but finitely many of the exponents $m_p(i)$ are zero. Then since $D_n$ is multiplicative, we have
\begin{gather}
  D_n(k_i)=\prod_p D_n(p^{m_p(i)}).
\end{gather}
Now, by Lemma \ref{limit_pm} and Proposition \ref{decreasing}, we get
\begin{gather}
  1\geq \prod_p D_n(p^{m_p(i)}) > \prod_p\left(1-\dfrac1{p^{n-1}}\right)^n = \dfrac{1}{\zeta(n-1)^n}>0,
\end{gather}
so it follows by dominated convergence that 
\begin{gather}
  \lim_{i\to\infty} \prod_p D_n(p^{m_p(i)}) = \prod_p \lim_{i\to\infty} D_n(p^{m_p(i)}). \label{ili}
\end{gather}
whenever $(k_1,k_2,\ldots)$ is a sequence of nonzero integers such that the limit $\lim_{i\to\infty} D_n(p^{m_p(i)})$ exists for each prime $p$.

Let $(k_1,k_2,\ldots)$ be a sequence of nonzero integers. It now follows from \eqref{ili}, Proposition \ref{decreasing} and the fact that $D_n(1)=1$, that \[D_n(k_i)\to 1\] if and only if $m_p(i)\to 0$ as $i\to\infty$ for all $p$, that is, if and only if $(k_1,k_2,\ldots)$ is a rough sequence. Likewise it follows, using Lemma \ref{limit_pm}, that \[D_n(k_i)\to \dfrac1{\zeta(n-1)^n}\] if and only if $m_p(i)\to \infty$ for all $p$, that is, if and only if $(k_1,k_2,\ldots)$ is a totally divisible sequence. Since $D_n(0)=1/\zeta(n-1)^n$, we may allow the elements of the sequence $(k_1,k_2,\ldots)$ to also assume the value $0$.

Finally, it follows that $D_n(k)\to 1$ as $n\to\infty$ uniformly with respect to $k$ since
\begin{gather}
  D_n(k)\geq \dfrac1{\zeta(n-1)^n}\to 1
\end{gather}
as $n\to\infty$ because $\zeta(n-1)=1+O(2^{-n})$ for $n\geq 3$. We have thus proved all parts of Theorem \ref{thm_density}.\qed

We conclude this section by proving Proposition \ref{density2}, which tells us the asymptotics of $D_2(k)$ for $n=2$.
\begin{proof}[Proof of Proposition \ref{density2}]
  If $m=0$, we have $D_2(p^m)=1$. Assume $m>0$.
  The $2\times 2$-matrices in Hermite normal form with determinant $p^m$ and primitive rows are of the form $\mat{1&0\\x&p^m}$ where $0\leq x<p^m, p\nmid x$. Thus $a'_2(p^m)=p^m(1-1/p)$. Moreover,
  \[a_2(p^m)=\sum_{d_1d_2=p^m}d_2=\sum_{i+j=m}p^i=\sum_{i=0}^mp^i=\dfrac{p^{m+1}-1}{p-1}=p^m \dfrac{1-1/p^{m+1}}{1-1/p} ,\]
  so
  \begin{gather}
      D_2(p^m)=\dfrac{(1-1/p)^2}{1-1/p^{m+1}} \label{dtwo}
  \end{gather}
  for all $m\geq 1$.
  We see immediately that $D_2(p^m)$ is strictly decreasing as a function of $m$, for any fixed $p$. Therefore
  \[\left(1-\dfrac 1p\right)^2\leq D_2(p^m)\leq 1-\dfrac 1p.\]
  Since $D_2$ is multiplicative, we get
  \[\left[\prod_{p|k}\left(1-\dfrac 1p\right)\right]^2\leq D_2(k)\leq \prod_{p|k}\left(1-\dfrac 1p\right).\]
  The left and right sides both tend to $0$ if and only if $\lim_{i\to\infty}\sum_{p|k_i}1/p\to\infty$, and they both converge to $1$ if and only if $\lim_{i\to\infty}\sum_{p|k_i}1/p\to0$.
\end{proof}

\subsection{The image of the density function} \label{proof_of_image}

\begin{proof}[Proof of Proposition \ref{densitydensity} for $n\geq 4$]
    By Proposition \ref{decreasing}, the function $D_n$ is multiplicative, and $D_n(p^m)$ is strictly decreasing as a function of $m$ for any fixed $p,n$. Thus we get $D_n(k)\leq D_n(2)$ whenever $k$ is divisible by $2$. When $k$ is not divisible by $2$, we get
    \begin{gather}
        D_n(k)\geq \prod_{p\geq 3}\lim_{m\to\infty}D_n(p^m) = 
        \prod_{p\geq 3}\left(1-1/p^{n-1}\right)^n = \\
        \dfrac1{\left(1-1/2^{n-1}\right)^n}\prod_{p}\left(1-1/p^{n-1}\right)^n = 
        \dfrac1{\left(1-1/2^{n-1}\right)^n}\dfrac1{\zeta(n-1)^n}.
    \end{gather}
    by Lemma \ref{limit_pm}.
    We will show that this value is larger than $D_n(2)$, which will prove that the image of $D_n:\Z\to\R$ is not dense in $[D_n(0),1]$. By equation \eqref{def_an} we have $a_n(2)=\sum_{i=1}^n 2^{i-1}=2^n-1$ and by Lemma \ref{incl_excl_an} we have $a_n'(2)=a_n(2)-na_{n}(1)=(2^n-1)-n$, so $D_n(2)=1-n/(2^n-1)$.

    Thus it suffices to prove
    \begin{gather}
        \dfrac1{\left(1-1/2^{n-1}\right)^n}\dfrac1{\zeta(n-1)^n} > 1-\dfrac n{2^n-1}. \label{sweet}
    \end{gather}
    This inequality can be verified numerically for $n=4,5$. Let us now assume $n\geq 6$. The inequality \eqref{sweet} is is equivalent to
    \begin{gather}
        -\log(1-n/(2^n-1)) -n\log\left(1-1/2^{n-1}\right) > n\log\zeta(n-1).
    \end{gather}
    By Taylor expansion, the first term on the left hand side is $> n/(2^n-1)> n/2^n$, and the second term on the left hand side is $> n/2^{n-1}$. Thus the inequality above follows from 
    $
        1/2^n+1/2^{n-1} \geq \log\zeta(n-1),
    $
    or equivalently
    $
        e^{3/2^{n}} \geq \zeta(n-1).
    $
    We bound the left hand side from below by $1+3/2^{n}$, and we bound the right hand side from above by $1+1/2^{n-1}+\int_2^\infty \frac{\dd x}{x^{n-1}}$. Thus the inequality follows from
    $
        1+3/2^n \geq 1+1/2^{n-1}+1/((n-2)2^{n-2})
    $
    or equivalently
    $
        3 \geq 2+4/(n-2),
    $
    which is true for all $n\geq 6$.
\end{proof}

\begin{proof}[Proof of Proposition \ref{densitydensity} for $n=2$]
    It suffices to show that the set of values of $-\log(D_2(k))$ as $k$ ranges over positive square-free integers is dense in $[0,\infty)$. By the identity \eqref{dtwo} we have
    \begin{gather}
        D_2(p)=\dfrac{(1-1/p)^2}{1-1/p^2} = \dfrac{1-1/p}{1+1/p} = \dfrac{p-1}{p+1}=1-\dfrac2{p+1}.
    \end{gather}
    Let $k>0$ be squarefree, and let $P_0$ be the set of primes dividing $k$. Then
    \begin{gather}
        -\log D_2(k) = -\log\prod_{p\in P_0}D_2(p)=\sum_{p\in P_0}\left(-\log\left(1-\dfrac2{p+1}\right)\right).
    \end{gather}
    The terms $d_p\defeq -\log(1-\frac2{p+1})$ are positive, decreasing, and tend to zero as $p\to\infty$. By Taylor expansion, the sum $\sum_p d_p$ over all primes is larger than $\sum_{p}\frac2{p+1}$, which diverges since $\sum_p 1/p$ diverges.

    Now, given any $x\in[0,\infty)$ and any $\e>0$, we can find a $k$ such that $-\log D_2(k)$ is within a distance $\e$ from $x$ as follows. Let $p_0$ be the smallest prime such that $d_{p_0}<\e$, and let $P_0$ be the smallest set of consecutive primes, starting with $p_0$, such that $\sum_{p\in P_0}d_p\geq x$. Then the sum $\sum_{p\in P_0}d_p=-\log D_2(k)$ is at a distance at most $d_{p_0}<\e$ from $x$ since $d_p$ is decreasing, where $k=\prod_{p\in P_0}p$, and we are done.
\end{proof}

\appendix

\section{Proof of Proposition \ref{twoandthree}} \label{appxtwoandthree}
We prove Proposition \ref{twoandthree}. Recall from equation \eqref{dtwo} that $D_2(p^m)=(1-1/p)^2/(1-1/p^{m+1})$ if $m>0$, and otherwise $D_2(p^0)=1$. Thus we see immediately that $D_2(p^m)$ is strictly decreasing as a function of $m$.

The case $n=3$ remains. By equation \eqref{def_an} we get
\begin{gather}
    a_3(p^m)=
    \sum_{j_1+j_2+j_3=m}p^{j_2+2j_3}=
    \sum_{j_3=0}^m p^{2j_3}\sum_{j_2=0}^{m-j_3}p^{j_3}=
    \sum_{j_3=0}^m p^{2j_3}\dfrac{1-p^{m-j_3+1}}{1-p}=\\
    \dfrac1{1-p}\sum_{j_3=0}^m (p^{2j_3}-p^{m+j_3+1})=
    \dfrac1{1-p}\left(\dfrac{1-p^{2(m+1)}}{1-p^2}-p^{m+1}\dfrac{1-p^{m+1}}{1-p}\right)=\\
    \dfrac{1-p^{2(m+1)}-(1+p)p^{m+1}+(1+p)p^{2(m+1)}}{(1-p)(1-p^2)}=\\
    \dfrac{1-p^{m+1}-p^{m+2}+p^{(m+1)+(m+2)} }{(1-p)(1-p^2)}=
    \dfrac{(p^{m+1}-1)(p^{m+2}-1) }{(p-1)(p^2-1)}.
\end{gather}
for all $m\geq 1$. 
Let us write $I(P)\defeq 1$ if the condition $P$ is true, and $I(P)\defeq 0$ if the condition $P$ is false.
By equation \eqref{def_anprime} we get for all $m\geq 1$ that
\begin{gather}
    a_3'(p^m)=
    \sum_{\substack{ j_1,j_2,j_3\geq 0: \\ p^{j_1}p^{j_2}p^{j_3}=p^m }} \prod_{i=1}^3 \sum_{\substack{r\geq 0:\\ p^r|p^{j_i}}}\mu(p^r)(p^{j_i-r})^{i-1}=
    \\
    \sum_{\substack{j_2,j_3\geq 0:\\ j_2+j_3=m} }\left(p^{j_2}-p^{j_2-1}I(j_2>0)\right)\left(p^{2j_3}-p^{2(j_3-1)}I(j_3>0)\right). \label{aswedidforthree}
\end{gather}
We expand the product in the summand and split the sum into several geometric series which we sum individually. We get
\begin{gather}
    \sum_{j_2=0}^m\left(p^{2m-j_2}-p^{2m-j_2-1}I(j_2>0)-p^{2m-j_2-2}I(m>j_2)+p^{2m-j_2-3}I(0<j_2<m)\right)=\\
    p^{2m}\left(\dfrac{1-p^{-(m+1)}}{1-p^{-1}}-p\inv\left(\dfrac{1-p^{-(m+1)}}{1-p^{-1}}-1\right)-p^{-2}\dfrac{1-p^{-m}}{1-p^{-1}}+p^{-3}\left(\dfrac{1-p^{-m}}{1-p^{-1}}-1\right)\right)=\\
    \dfrac{p^{2m}}{1-p\inv} \Big(1-p^{-(m+1)}-p\inv\left(1-p^{-(m+1)}-(1-p^{-1})\right)-\\
        p^{-2}(1-p^{-m})+p^{-3}(1-p^{-m}-(1-p^{-1}))\Big)=\\
    \dfrac{p^{2m}}{1-p\inv} \left(
        (1-p^{-2})^2-p^{-m-1}(1-p\inv)^2
    \right) = 
    \dfrac{
        p^{2m}(1-p^{-2})^2
        -p^{m-1}(1-p\inv)^2
    }{1-p\inv} = \\
        (p^{2m}(p+1)^2 -p^{m+1})
    \dfrac{p-1}{p^3}.
\end{gather}
Since $D_3(1)=1$ and $D_3(p^m)<1$ for all $m>0$ (the diagonal matrix with diagonal entries $1,1,p^m$ is in Hermite normal form, but its last row is not primitive), it suffices to show that $D_3(p^m)>D_3(p^{m+1})$ for all $m\geq 1$. To see this, we note that
\begin{gather}
    \dfrac{p^{2m}(p+1)^2 -p^{m+1}}  {(p^{m+1}-1)(p^{m+2}-1)} > \dfrac{p^{2m+2}(p+1)^2 -p^{m+2}}  {(p^{m+2}-1)(p^{m+3}-1)} \iff \\
    p^{2m}(p+1)^2[(p^{m+3}-1)-p^2(p^{m+1}-1)] - p^{m+1}[(p^{m+3}-1)-p(p^{m+1}-1)] > 0 \iff \\
    p^{2m}(p+1)^2(p^2-1) - p^{m+1}(p^{m+2}+1)(p-1) > 0 \iff \\
    p^{2m}(p+1)^3 - p^{2m}(p^{3}+p^{1-m}) > 0,
\end{gather}
where the last inequality is true since $(p+1)^3>p^3+1\geq p^3+p^{1-m}$ for all $m\geq 1$ and all $p\geq 2$. This concludes the proof of Proposition \ref{twoandthree}.\qed

\section {Proof of Proposition \ref{fourandfive}} \label{appxfourandfive}
\subsection{The case $n=4$}
We prove Proposition \ref{fourandfive} for $n=4$. By equation \eqref{def_anprime}, we can write 

\begin{gather}
    a_4'(p^m)=
    \sum_{\substack{ j_1,j_2,j_3,j_4\geq 0: \\ p^{j_1}p^{j_2}p^{j_3}p^{j_4}=p^m }} \prod_{i=1}^4 \sum_{\substack{r\geq 0:\\ p^r|p^{j_i}}}\mu(p^r)(p^{j_i-r})^{i-1}=
    \\
    \sum_{j_2+j_3+j_4=m}\left(p^{j_2}-p^{j_2-1}I(j_2>0)\right)\left(p^{2j_3}-p^{2(j_3-1)}I(j_3>0)\right)\left(p^{3j_4}-p^{3(j_4-1)}I(j_4>0)\right),
\end{gather}
where $I(P)$ is defined as in \eqref{aswedidforthree}.
We evaluate this sum in the same way that we evaluated $a_3'(p^m)$ in Appendix \ref{appxtwoandthree}:
We expand the product in the summand and eliminate the symbols $I(P)$ by splitting the sum into several geometric series over different ranges, corresponding to the conditions $j_2>0$, and so on, and compute each geometric series individually. We assume $m\geq 1$ to guarantee that $\sum_{j_2=1}^m$, for instance, is never an empty sum. Thus, by a tedious but straightforward calculation, we get 
\begin{gather}
    a_4'(p^m) = \dfrac{(p-1) p^{m-6}}{p+1}
    \Big(p^{2 m}-p^{m+1}-4 p^{m+2}-6 p^{m+3}-4 p^{m+4}-p^{m+5}+3 p^{2 m+1}+\\
    6 p^{2 m+2}+7 p^{2m+3}+6 p^{2 m+4}+3 p^{2 m+5}+p^{2 m+6}+p^3\Big). \label{fusk1}
\end{gather}

Using this, one may show that
\begin{gather}
    a_4'(p^{m+1})^2-a_4'(p^m)a_4'(p^{m+2})=\\
    (p-1)^4 p^{3 m-7} \left((p+1)^2 \left(p^2+p+1\right)^3 p^{2 m}-\left(p^2+p+1\right)^3 p^m+(p+1)^2 p\right) = \label{fusk2} \\
    (p-1)^4 p^{3 m-7} \left( ((p+1)^2 p^m-1)\left(p^2+p+1\right)^3 p^{m}+(p+1)^2 p\right),
\end{gather}
which we see is positive for all $p\geq 2$ and all $m\geq 1$. Moreover, using $a_4'(p^0)=1$, we get
\begin{gather}
        a_4'(p^{1})^2-a_4'(p^0)a_4'(p^{2}) = (p-1) (p+2) \left(p^3-3\right),
\end{gather}
which is positive for all $p\geq 2$.
Thus we have proved the inequality \eqref{anprimelogconcave} for all $m\geq 0$, which completes the proof of Proposition \ref{fourandfive} for the case $n=4$.\qed

Equations \eqref{fusk1} and \eqref{fusk2} may be verified with a computer algebra system, for instance with the Mathematica code provided at
\begin{center}
    \url{http://www.math.kth.se/~holmin/files/x/a4prime_is_logconcave}.
\end{center}

\subsection{The case $n=5$}
We prove Proposition \ref{fourandfive} for $n=5$. We repeat the procedure above. We evaluate
\begin{gather}
    a_5'(p^m)=
    \sum_{j_2+j_3+j_4+j_5=m}\left(p^{j_2}-p^{j_2-1}I(j_2>0)\right)\left(p^{2j_3}-p^{2(j_3-1)}I(j_3>0)\right)\times\\
    \times \left(p^{3j_4}-p^{3(j_4-1)}I(j_4>0)\right)\left(p^{4j_5}-p^{4(j_5-1)}I(j_4>0)\right).
\end{gather}
As before, we expand the product in the summand, and split the sum into several geometric series. This yields
$a_5'(p^m) = \frac{p-1}{p^{10} (p+1) \left(p^2+p+1\right)} \Big(
    p^{4 m}-p^{m+6}+p^{2 m+3}+5 p^{2 m+4}+11 p^{2 m+5}+14 p^{2 m+6}+11 p^{2 m+7}+5 p^{2
   m+8}+p^{2 m+9}-p^{3 m+1}-5 p^{3 m+2}-15 p^{3 m+3}-30 p^{3 m+4}-45 p^{3 m+5}-51
   p^{3 m+6}-45 p^{3 m+7}-30 p^{3 m+8}-15 p^{3 m+9}-5 p^{3 m+10}-p^{3 m+11}+4 p^{4
   m+1}+10 p^{4 m+2}+20 p^{4 m+3}+31 p^{4 m+4}+40 p^{4 m+5}+44 p^{4 m+6}+40 p^{4
   m+7}+31 p^{4 m+8}+20 p^{4 m+9}+10 p^{4 m+10}+4 p^{4 m+11}+p^{4 m+12}
   \Big)$, valid for $m\geq 1$.

We get $a_5'(p)-a_5'(1)a_5'(p^2)=(p-1) \left((p-1) p \left(p^2+p+3\right) (p (p+2)+2)-10\right)$, which we see is positive, and thus we have proved the inequality \eqref{anprimelogconcave} for $m=0$.

For $m\geq 1$, we get $a_5'(p^{m+1})^2-a_5'(p^m)a_5(p^{m+2})=
   \frac{(p-1)^4 p^{3 m-13}}{p^2+p+1} \Big(
    -p^{3 m}+p^{4 m}-p^{m+2}-4 p^{m+3}-10 p^{m+4}-16 p^{m+5}-19 p^{m+6}-16 p^{m+7}-10
   p^{m+8}-4 p^{m+9}-p^{m+10}+2 p^{2 m+1}+10 p^{2 m+2}+34 p^{2 m+3}+80 p^{2 m+4}+143
   p^{2 m+5}+201 p^{2 m+6}+224 p^{2 m+7}+201 p^{2 m+8}+143 p^{2 m+9}+80 p^{2 m+10}+34
   p^{2 m+11}+10 p^{2 m+12}+2 p^{2 m+13}-8 p^{3 m+1}-32 p^{3 m+2}-88 p^{3 m+3}-188
   p^{3 m+4}-328 p^{3 m+5}-480 p^{3 m+6}-600 p^{3 m+7}-646 p^{3 m+8}-600 p^{3
   m+9}-480 p^{3 m+10}-328 p^{3 m+11}-188 p^{3 m+12}-88 p^{3 m+13}-32 p^{3 m+14}-8
   p^{3 m+15}-p^{3 m+16}+6 p^{4 m+1}+23 p^{4 m+2}+64 p^{4 m+3}+143 p^{4 m+4}+266 p^{4
   m+5}+423 p^{4 m+6}+584 p^{4 m+7}+706 p^{4 m+8}+752 p^{4 m+9}+706 p^{4 m+10}+584
   p^{4 m+11}+423 p^{4 m+12}+266 p^{4 m+13}+143 p^{4 m+14}+64 p^{4 m+15}+23 p^{4
   m+16}+6 p^{4 m+17}+p^{4 m+18}+p^6+2 p^5+p^4
   \Big)$. The first factor is obviously positive for $p\geq 2$, and the second factor may be rearranged as
$
(752 p^{4m+9}        -646 p^{3m+8}  )+             
(706 p^{4m+10}       -600 p^{3m+9}  )+             
(706 p^{4m+8}        -600 p^{3m+7}  )+             
(584 p^{4m+11}       -480 p^{3m+10} )+             
(584 p^{4m+7}        -480 p^{3m+6}  )+             
(423 p^{4m+12}       -328 p^{3m+11} )+             
(423 p^{4m+6}        -328 p^{3m+5}  )+             
(266 p^{4m+13}       -188 p^{3m+12} )+             
(266 p^{4m+5}        -188 p^{3m+4}  )+             
(143 p^{4m+14}       -88 p^{3m+13}  )+             
(143 p^{4m+4}        -88 p^{3m+3}   )+             
(64 p^{4m+15}        -32 p^{3m+14}  )+             
(64 p^{4m+3}         -32 p^{3m+2}   )+             
(23 p^{4m+16}        -8 p^{3m+15}   )+             
(23 p^{4m+2}         -8 p^{3m+1}    )+             
(6 p^{4m+17}         -  p^{3m+16}   )+             
(6 p^{4m+1}          -  p^{3m}      )+             
(224 p^{2m+7}        -19 p^{m+6}   )+             
(201 p^{2m+8}        -16 p^{m+7}   )+             
(201 p^{2m+6}        -16 p^{m+5}   )+             
(143 p^{2m+9}        -10 p^{m+8}   )+             
(143 p^{2m+5}        -10 p^{m+4}   )+             
(80 p^{2m+10}        -4 p^{m+9}    )+             
(80 p^{2m+4}         -4 p^{m+3}    )+             
(34 p^{2m+11}        -  p^{m+10}   )+             
(34 p^{2m+3}         -  p^{m+2}    )              
+10 p^{2m+12}                                      
+10 p^{2m+2}                                       
+2 p^{2m+13}                                       
+2 p^{2m+1}                                        
+2 p^5                                             
+  p^{4m+18}                                       
+  p^{4m}                                          
+  p^6                                             
+  p^4,$
where every term is positive for all $p\geq 2$, and we have thus proved the inequality \eqref{anprimelogconcave} for $m\geq 1$. This concludes the proof of Proposition \ref{fourandfive} for $n=5$.\qed

The computations of $a_5'(p^m)$ and $a_5'(p^{m+1})^2-a_5'(p^m)a_5'(p^{m+2})$ may be verified with the Mathematica code provided at
\begin{center}
\noindent\url{http://www.math.kth.se/~holmin/files/x/a5prime_is_logconcave}.
\end{center}

\section{Calculation of a measure} \label{section_katznelson_measure}\index{$C_0$}
In \cite{Katznelson} the asymptotics
\[N_{n,0}(T)=\dfrac{n-1}{\zeta(n)}w(B)T^{n(n-1)}\log T+\BigO(T^{n(n-1)})\]
are given, where $B$ is the unit ball in $M_n(\R)$. 
The measure $w$ on $M_n(\R)$ is defined in \cite{Katznelson} as follows.
Let $A_u\defeq \{A\in M_n(\R): Au=0\}$ be the space of matrices annihilating the nonzero vector $u\in\R^n\setminus\{0\}$. We define for (Lebesgue measurable) subsets $E\subseteq M_n(\R)$ the measure $w_u(E)\defeq \vol(E\cap A_u)$ where $\vol$ is the standard $n(n-1)$-dimensional volume on $A_u$, and define the measure $w(E)\defeq (1/2)\int_{\operatorname S^{n-1}}w_u(E)\dd \nu(u)$, where $\nu$ is the standard Euclidean surface measure on the $(n-1)$-dimensional sphere $\operatorname S^{n-1}$.

We shall now calculate $w(B)$.
The set $B\cap A_u$ is the unit ball in the $n(n-1)$-dimensional vector space $A_u$. Its volume does not depend on $u\neq 0$, and if $u=(0,\ldots,0,1)$, then $B\cap A_u$ is the unit ball in $\R^{n(n-1)}$, when identifying $M_n(\R)$ with $\R^{n^2}$. Denote by $V_{n(n-1)}$ the volume of the unit ball in $\R^{n(n-1)}$. Thus $w_u(B)=V_{n(n-1)}$, independently of $u\neq 0$, and
\[w(B)=V_{n(n-1)}\dfrac12\int_{\operatorname S^{n-1}}\dd \nu(u)=\dfrac{V_{n(n-1)}S_{n-1}}2,\]
where $S_{n-1}$ is the surface area of the sphere $\operatorname S^{n-1}$. The volume and surface area of the unit ball is well known, and we may explicitly calculate
\[C_0\defeq w(B)=\dfrac{\pi^{n^2/2}}{\Gamma\left(\dfrac n2\right)\Gamma\left(\dfrac{n(n-1)}2+1\right)}.\]
Recalling from Theorem \ref{drs_lemma} the expression for $C_1$, we get the following relation.
\[C_1=\dfrac1{\zeta(2)\cdots\zeta(n)}\dfrac{\pi^{n^2/2}}{\Gamma\left(\dfrac n2\right)\Gamma\left(\dfrac{n(n-1)}2+1\right)}=\dfrac1{\zeta(2)\cdots\zeta(n)}C_0.\]

\section*{Acknowledgements}
 I would like to thank my advisor Pär Kurlberg for suggesting this problem to me and for all his help and encouragement.

\phantomsection
\addcontentsline{toc}{chapter}{Bibliography}
\bibliographystyle{alpha} 
\bibliography{mybib}{}


\end{document}